\newcommand{\wVdual}{\omega_{\mathcal{V}^*}}
\newcommand{\wFdual}{\omega_{\mathcal{F}^*}}
\newcommand{\bG}{\mathbb{G}}
\newcommand{\bH}{\mathbb{H}}
\newcommand{\ba}{\backslash}
\newcommand{\Tt}{\mathcal{T}}
\newcommand{\T}{\boldsymbol{T}}
\newcommand{\V}{\mathcal{V}}
\newcommand{\F}{\mathcal{F}}
\newcommand{\wV}{\omega_{\mathcal{V}}}
\newcommand{\wF}{\omega_{\mathcal{F}}}
\newcommand{\pG}{\underline{\mathbb{G}}}
\newcommand{\n}{\eta(e)}
\newcommand{\m}{\mu(e)}
\newcommand{\mgd}{\mu_{\pG^*}(e)}
\newcommand{\np}{\eta_{\pG}(e)}
\newtheorem{theorem}{Theorem}[section]
\newtheorem{corollary}[theorem]{Corollary}
\newtheorem{lemma}[theorem]{Lemma}
\newtheorem{proposition}[theorem]{Proposition}
\theoremstyle{definition}
\newtheorem{definition}[theorem]{Definition}
\newtheorem{example}[theorem]{Example}
\begin{document}

\begin{frontmatter}
\title{Deletion-Contraction and the Surface Tutte Polynomial}

\author[rh]{Iain Moffatt}
\ead{iain.moffatt@rhul.ac.uk}
\author[rh]{Maya Thompson}
\ead{maya.thompson.2019@live.rhul.ac.uk}

\affiliation[rh]{organization={Department of Mathematics},
           addressline={Royal Holloway}, 
             addressline={University of London}, 
            city={Egham}, 
            postcode={{TW20~0EX}},
           country={United Kingdom}}

\begin{abstract}
In this paper we unify two families of topological Tutte polynomials.
The first family is that coming from the surface Tutte polynomial, a polynomial that arises in the theory of local flows and tensions. 
The second family arises from the canonical Tutte polynomials of Hopf algebras. 
Each family includes the Las~Vergnas, Bollob\'as--Riordan, and Krushkal polynomials. 
As a consequence we determine a deletion--contraction definition of the surface Tutte polynomial and recursion relations for the number of local flows and tensions in an embedded graph.

\end{abstract}

\begin{keyword} 
 Bollob\'as--Riordan polynomial \sep  Krushkal polynomial \sep graphs in surfaces \sep graphs in pseudo-surfaces \sep  graph polynomial \sep Las~Vergnas polynomial \sep 
ribbon graph \sep  Tutte polynomial  


\MSC[2020] 05C31 \sep 05C10
\end{keyword}
\end{frontmatter}


\section{Introduction}\label{sint}
By way of motivation,  we begin with a brief discussion of the classical Tutte polynomial
\begin{equation}\label{tp}
T(G;x,y):=\sum_{A\subseteq E} (x-1)^{r(E)-r(A)}(y-1)^{|A|-r(A)} 
\end{equation}
  of a graph $G=(V,E)$,  where  $r(A)$ is the rank of the spanning subgraph on $A$. (See, for example,~\cite{zbMATH01179517, zbMATH00067324, zbMATH05828840, zbMATH07553843,zbMATH00437298} for  background on the Tutte polynomial.) 
Tutte introduced his polynomial, initially named the dichromate, as a bivariate generalisation of both the flow and chromatic polynomials~\cite{zbMATH03087501,zbMATH02075368}. 
Indeed the number of nowhere-zero $\mathbb{Z}_k$-flows  and the number of proper $k$-colourings of $G$  can be recovered from the Tutte polynomial through the evaluations $T(G;0,1-k)$ and $T(G;1-k,0)$, respectively. 
(See, for example, \cite[Section~3.4]{zbMATH07553843} for definitions and details.)
This can be seen to be a consequence of the \emph{Universality Property} of the Tutte polynomial, which gives that any graph parameter $U$ that satisfies the deletion-contraction relations
\begin{equation}\label{tp2}
U(G) = \begin{cases} 
x\,U(G\backslash e) & \text{if $e$ is a bridge,} \\
y\,U(G/ e) & \text{if $e$ is a loop,} \\
a\,U(G\backslash e) + b\,U(G\backslash e) & \text{if $e$ is an ordinary edge,} \\ \alpha^{|V|} &\text{if $G$ is edgeless};
\end{cases}
\end{equation}
 for some parameters $\alpha,a,b,x,y$, is necessarily an evaluation of the Tutte polynomial (see, for example,~\cite{zbMATH01179517,zbMATH07553843} for full details). Taking $\alpha=a=b=1$ and $x$ and $y$ to be variables in~\eqref{tp2} gives the \emph{deletion-contraction relations} for the Tutte polynomial, and $T(G;x,y)$ is uniquely defined through these relations.    The importance of the Universality Property is that it shows the Tutte polynomial encapsulates all graph deletion-contraction invariants, and goes towards explaining the importance of the polynomial in mathematics, why it appears in areas such as knot theory and statistical physics,  and why it captures such a wide and diverse range of graph parameters. Of particular relevance here is that deletion-contraction, flows and colourings are inextricably linked. 

Notice that Equation~\eqref{tp2} provides a recursive definition for the graph polynomial where the initial conditions are given by its values on edgeless graphs. In general, following common usage in the area, we say that a (topological) graph polynomial has a \emph{full} deletion-contraction definition (or \emph{full} recursive definition) if it is uniquely defined by a set of deletion-contraction relations together with its values on edgeless graphs. Thus edgeless graphs provide the initial conditions of the deletion-contraction recurrence relations for the polynomial. 

 \medskip

In this paper we are interested in analogues of the Tutte polynomial for graphs embedded in \emph{orientable} surfaces. (In subsequent sections we shall realise embedded graphs as ribbon graphs.) Considerable attention has been paid to such \emph{topological Tutte polynomials} over recent years. Much of this recent interest grew from Bollob\'as and Riordan's papers~\cite{bollobasriordanpoly,zbMATH01801590}, although the Las~Vergnas polynomial predates it~\cite{zbMATH03722664}. 
A challenge when studying topological Tutte polynomials is that, unlike the classical case for graphs, there are many different analogues for the Tutte polynomial~\cite{bollobasriordanpoly,zbMATH01801590,maps1, maps2, HM,KMT,krushkalpoly,zbMATH03722664,zbMATH06951578}. 
Here we tie together two strands in the development of topological Tutte polynomials, and as a consequence find a graph  polynomial that has a full deletion-contraction definition and that subsumes all these topological Tutte polynomials.

Initially, topological Tutte polynomials were defined through  state-sum formulas analogous to~\eqref{tp}. This is the case for the various polynomials introduced in~\cite{bollobasriordanpoly,zbMATH01801590,maps1, maps2, krushkalpoly,zbMATH03722664}. A difficulty with these polynomials is that, unlike  the classical Tutte polynomial, their deletion-contraction relations do not apply to all edges of graphs, and therefore do not provide a full recursive definition of the polynomial.

A systematic approach to constructing graph polynomials which have full recursive definitions  was taken in~\cite{HM,KMT}. There the idea was to start with some concept of deletion and contraction and, via Hopf algebras, use them to  generate a graph polynomial. 
 The resulting polynomials, called \emph{canonical Tutte polynomials},  have a Universality Property akin to~\eqref{tp2} and deletion-contraction recurrence relations with edgeless graphs providing the initial conditions.
 It was shown in~\cite{HM,KMT} that there is a \emph{canonical Tutte polynomial of graphs in pseudo-surfaces} that has a full recursive deletion-contraction definition, has a Universality Property, and contains all of Las~Vergnas', Bolloba\'s and Riordan's and Krushkal's topological Tutte polynomials as specialisations. (See~\cite{zbMATH05937085} for a non-full ``Recipe Theorem'' for the Bolloba\'s--Riordan polynomial.)

After that work, a different approach to constructing topological Tutte polynomials was taken by Goodall et al. in~\cite{maps1, maps2}. Their approach to topological Tutte polynomials was motivated by Tutte's original construction of $T(G;x,y)$ as a bivariate generalisation of both the flow and chromatic polynomials. 
  Flows and tensions of graphs (the latter corresponding to vertex colourings)
can be extended to embedded graphs in the form of \emph{local flows} and \emph{local
tensions} taking values in a not necessarily Abelian group (the cyclic order of
edges around a vertex or face given by the embedding ensuring they are well
defined). Enumerating them gives rise to analogues for embedded graphs of
the chromatic and flow polynomial for graphs. The authors of~\cite{maps1} were able to combine these two polynomials to construct a multivariate polynomial of graphs in surfaces, called the \emph{surface Tutte polynomial}. Just as the Tutte polynomial of a graph yields by evaluation the number of nowhere-zero flows and tensions, so the surface Tutte polynomial of an embedded graph yields by evaluation the number of nowhere-identity local flows and tensions.
 However, the surface Tutte polynomial was defined through a state-sum analogous to~\eqref{tp} and it does not have a full deletion-contraction definition or Universality Property akin to Equation~\eqref{tp2}.

In this paper we unify the approaches of Krajewski et al.~\cite{HM,KMT} and Goodall et al.~\cite{maps1}. We show that \emph{all} of the above polynomials belong to a single family and can be recovered from a single graph polynomial. 
We do this by extending the domain of the surface Tutte polynomial to objects we call ``packaged ribbon graphs'' (introduced in Section~\ref{anj}). 
Then in Section~\ref{shjk} we introduce a Tutte polynomial for packaged ribbon graphs (Definition~\ref{fhasj}) and show that it has a full recursive deletion-contraction definition with initial values given by edgeless ribbon graphs (Theorem~\ref{dcthm}), and that it has a Universality Property analogous to that for the classical Tutte polynomial (Theorem~\ref{thun}).  
 We show that our graph polynomial specialises to both the surface Tutte polynomial of~\cite{maps1} \emph{and} the canonical Tutte polynomials~\cite{HM,KMT} of graphs in surfaces (Theorem~\ref{asg}). It follows that our polynomial includes \emph{all} of the topological Tutte polynomials discussed above (see Figure~\ref{rhj}). Finally, in Section~\ref{daha} we apply our results to give a recursive way to count the numbers of local flows and local tensions in a graph in a surface.

\section{Packaged ribbon graphs}\label{anj}
As noted above, our approach is to unify the two families of topological Tutte polynomials by extending their domain to a class of objects we call  \emph{packaged ribbon graphs}. In this section we introduce packaged ribbon graphs and operations of deletion, contraction and duality on them. 
Packaged ribbon graphs consist of ribbon graphs equipped with a partition on their sets of vertices and sets of boundary components. Furthermore, each block of a partition has a weight. It is well-known that ribbon graphs describe graphs cellularly embedded in surfaces (see e.g., \cite{graphsonsurfaces,zbMATH01624430}). Similarly, packaged ribbon graphs describe graphs (not necessarily cellularly) embedded in pseudo-surfaces.

\subsection{A review of ribbon graphs}
We begin with a brief overview of standard ribbon graph terminology and notation.  Further details on ribbon graphs can be found in \cite{graphsonsurfaces}, and a reader familiar with them may safely skip this subsection. Note that in this paper we assume all our ribbon graphs are orientable (in general, they can be non-orientable).

\begin{definition}
    A \emph{ribbon graph} $\bG=(V,E)$ is an orientable surface with boundary represented by the union of a set of discs $V$, called the vertices, and another set of discs $E$, called the edges, satisfying the following constraints:
   \begin{enumerate}
        \item A vertex and edge are either disjoint or meet in an arc.
        \item Each such arc lies on the boundary of just one vertex and one edge.
        \item Every edge contains two such arcs.
    \end{enumerate}
\end{definition}
In the definition of a ribbon graph, note that the orientable surface with boundary need not be connected.

Given a ribbon graph $\bG$, we use $v(\bG)$, $e(\bG)$, $f(\bG)$ and $k(\bG)$ to denote the number of vertices, edges, boundary components and connected components respectively. 
A ribbon graph $\bG$ has  \emph{genus} $g(\bG):=\frac{1}{2}(2k(\bG)-f(\bG)+e(\bG)-v(\bG))$; \emph{rank} $r(\bG):=v(\bG)-k(\bG)$; and \emph{nullity} $n(\bG):=e(\bG)-r(\bG)$. 
Where it makes sense, we  use this terminology and notation for graphs too.

Let $\bG=(V,E)$ be a ribbon graph and $e\in E$ be an edge. The ribbon graph obtained by \emph{deleting} $e$ is denoted by $\bG\ba e:=(V,E\setminus e)$ (we use the convention of denoting singleton sets by the element they contain). 
Similarly,  the ribbon graph obtained by deleting an isolated vertex $v$ (i.e. a vertex with no incident edges) is denoted by $\bG\ba v:=(V\setminus v,E)$. When deleting edges one after another the order in which we do so does not matter, and likewise when deleting vertices. This allows us to extend the operation of deletion to subsets of edges and subsets of vertices.
We say that a ribbon graph $\bH$ is a \emph{ribbon subgraph} of a ribbon graph $\bG$ if it can be obtained from $\bG$ by deleting a subset of edges (possibly empty) and then deleting a subset of isolated vertices (possibly empty).
Let $\bH=(V',A)$ be a ribbon subgraph of $\bG=(V,E)$. We say $\bH$ is a \emph{spanning ribbon subgraph} of $\bG$ if $V'=V$ and in this case we denote $\bH$ by $\bG|A$. Note that $\bG|A=\bG\backslash A^c$ where $A^c=E\setminus A$.

\begin{definition} 
 Let $\bG=(V,E)$ be a ribbon graph. Let $e\in E$ and  $u$ and $v$ be its incident vertices, which are not necessarily distinct. Then  $\bG/e$ denotes the ribbon graph obtained as follows. Consider the boundary component(s) of $e\cup u \cup v$ as curves on $\bG$. For each resulting curve, attach a disc, which will form a vertex of $\bG/e$, by identifying its boundary component with the curve. Delete $e$, $u$ and $v$ from the resulting complex.
We say that $\bG/e$ is obtained from $\bG$ by \emph{contracting} $e$.
\end{definition}

For a depiction of ribbon graph contraction see Table~\ref{Con} and disregard the colouring and weighting, and similarly for ribbon graph deletion and Table~\ref{Del}.

The \emph{(geometric) dual} $\bG^*$ of a ribbon graph $\bG=(V,E)$ can be constructed in the following way. 
Recall that, topologically, a ribbon graph is a surface with boundary; we cap off the holes using a set of discs, denoted by $V(\bG^*)$, to obtain a surface without boundary. The geometric dual of $\bG$ is the ribbon graph $\bG^* = (V(\bG^*),E)$. Observe that the edges of $\bG$ and $\bG^*$ are identical. The only change is which arcs on their boundaries meet vertices.

\subsection{Packaged ribbon graphs and operations on them}

We shall now introduce packaged ribbon graphs, which are our main object of study. Recall that, we assume every ribbon graph is orientable including those within packaged ribbon graphs.

\begin{definition}
   A  \emph{packaged ribbon graph} is a tuple $\pG=(\bG,\V,\wV,\F,\wF)$ where: 
    \begin{enumerate}
        \item $\bG=(V,E)$ is a ribbon graph with set of boundary components $F$;
        \item $\V$ is a partition of $V$, and $\F$ is a partition of $F$;
        \item $\wV:\V\to \mathbb{N}_0$ is a \emph{weighting} on the blocks of $\V$, and $\wF:\F\to\mathbb{N}_0$ is a \emph{weighting} on the blocks of $\F$.
    \end{enumerate}
\end{definition}

For each vertex $v\in V$, we denote its block in the partition $\V$ by $[v]$. Similarly, for each boundary component $f\in F$, we denote its block in the partition $\F$ by $[f]$. (Note that in this paper the term ``block" will always refer to the block of a partition and never a block of a graph.)

A packaged ribbon graph consists of a ``coloured ribbon graph'' as defined by Huggett and Moffatt~\cite[Definition~11]{HM} in which the vertex colours and boundary component colours are weighted. In practice, it can be easier to visualise the partitions $\V$ and $\F$ as colourings on the vertices and boundary components respectively. In which case, the block weightings become colour weightings.
For this reason, the figures in this paper will use ribbon graphs with weighted vertex and boundary component colourings to depict packaged ribbon graphs. The weightings $\wV$ and $\wF$ will be represented by a coloured tuple where each integer corresponds to the weight function on that colour. An example can be found in Figure~\ref{Packaging}.

\medskip
Deletion and contraction are defined for packaged ribbon graphs following their definitions for coloured ribbon graphs from~\cite[Definition~13]{HM} and specifying how the weights change, as below.
 The reader may find it helpful to consult Tables~\ref{Del} and~\ref{Con} while reading the definitions.

\begin{definition}
    Let $\pG=(\bG,\V,\wV,\F,\wF)$ be a packaged ribbon graph, $F$ be its set of boundary components, and $e$ be an edge in $\bG$. We use $\pG\ba e$ to denote the packaged ribbon graph $(\bG\ba e,\V',\wV',\F',\wF')$ obtained by \emph{deleting} $e$, and define it as follows. Recalling $\bG$ and $\bG\ba e$ have the same vertices,  we set $\V':=\V$ and $\wV':=\wV$. For the boundary components, $\F'$ and $\wF'$ are defined as follows.
    \begin{enumerate}
        \item\label{dcase1} If, in $\bG$, the edge $e$ intersects one boundary component $f\in F$ twice, then delete $e$ from $\bG$ and let $f'$, $g'$ denote the two boundary components formed by the deletion. Obtain the partition $\F'$ from $\F$ by replacing $[f]$ with $[f']:=[f]\cup\{f',g'\}\setminus \{f\}$. The other blocks are unchanged. Define 
        \[\wF'([x]):=
        \begin{cases}
        \wF([x])+1 & \text{ if } [x]=[f'],\\
        \wF([x]) & \text{ if } [x]\neq [f'].
        \end{cases}\]
        
        \item\label{dcase2} If, in $\bG$,  the edge $e$ intersects two boundary components $f,g\in F$ and $[f]=[g]$, then delete $e$ from $\bG$ and let $f'$ denote the boundary component formed by the deletion. Obtain the partition $\F'$ from $\F$ by replacing $[f]$ with $[f']:=[f]\cup\{f'\}\setminus \{f, g\}$. The other blocks are unchanged. Define
        \[\wF'([x]):=
        \begin{cases}
        \wF([x])+1 & \text{ if } [x]=[f'],\\
        \wF([x]) & \text{ if } [x]\neq [f'].
        \end{cases}\]
        
        \item\label{dcase3} If, in $\bG$,  the edge $e$ intersects two boundary components $f,g\in F$ and $[f]\neq [g]$, then delete $e$ from $\bG$ and let $f'$ denote the boundary component formed by the deletion. Obtain the partition $\F'$ from $\F$ by removing both $[f]$ and $[g]$, and inserting $[f']:=[f]\cup[g]\cup \{f'\}\setminus\{f,g\}$. The other blocks are unchanged. Define 
        \[\wF'([x]):=
        \begin{cases}
        \wF([f])+\wF([g]) & \text{ if } [x]=[f'],\\
        \wF([x]) & \text{ if } [x]\neq [f'].
        \end{cases}\]
    \end{enumerate}
    
\end{definition}

\begin{table}
    \centering
    \begin{tabular}{| c | c |}
        \hline
        $\pG$ & $\pG\ba e$ \\
        \hline
         \tikzset{every picture/.style={line width=0.5pt}} 

\begin{tikzpicture}[x=0.55pt,y=0.55pt,yscale=-1,xscale=1]

\draw  [fill={rgb, 255:red, 0; green, 0; blue, 0 }  ,fill opacity=1 ] (59,55) .. controls (59,41.19) and (70.19,30) .. (84,30) .. controls (97.81,30) and (109,41.19) .. (109,55) .. controls (109,68.81) and (97.81,80) .. (84,80) .. controls (70.19,80) and (59,68.81) .. (59,55) -- cycle ;
\draw  [fill={rgb, 255:red, 0; green, 0; blue, 0 }  ,fill opacity=1 ] (199,54) .. controls (199,40.19) and (210.19,29) .. (224,29) .. controls (237.81,29) and (249,40.19) .. (249,54) .. controls (249,67.81) and (237.81,79) .. (224,79) .. controls (210.19,79) and (199,67.81) .. (199,54) -- cycle ;
\draw [color={rgb, 255:red, 223; green, 83; blue, 107 }  ,draw opacity=1 ][line width=2.25]    (105.9,43.91) -- (200.9,43.91) ;
\draw [color={rgb, 255:red, 223; green, 83; blue, 107 }  ,draw opacity=1 ][line width=2.25]    (106.7,64.71) -- (201.7,64.71) ;
\draw [color={rgb, 255:red, 223; green, 83; blue, 107 }  ,draw opacity=1 ][line width=2.25]    (32.1,18.91) -- (64.5,39.31) ;
\draw [color={rgb, 255:red, 223; green, 83; blue, 107 }  ,draw opacity=1 ][line width=2.25]    (244.1,69.31) -- (276.5,89.71) ;
\draw [color={rgb, 255:red, 223; green, 83; blue, 107 }  ,draw opacity=1 ][line width=2.25]    (63.7,69.4) -- (31.3,89.8) ;
\draw    (59.82,60.43) -- (27.42,80.83) ;
\draw    (281.07,28.6) -- (248.67,49) ;
\draw    (248.1,60.01) -- (280.5,80.41) ;
\draw    (27.1,27.93) -- (59.5,48.33) ;
\draw  [draw opacity=0][line width=2.25]  (107.57,63.35) .. controls (104.13,73.05) and (94.88,80) .. (84,80) .. controls (75.08,80) and (67.25,75.33) .. (62.83,68.3) -- (84,55) -- cycle ; \draw  [color={rgb, 255:red, 223; green, 83; blue, 107 }  ,draw opacity=1 ][line width=2.25]  (107.57,63.35) .. controls (104.13,73.05) and (94.88,80) .. (84,80) .. controls (75.08,80) and (67.25,75.33) .. (62.83,68.3) ;  
\draw  [draw opacity=0][line width=2.25]  (63.54,40.62) .. controls (68.07,34.2) and (75.54,30) .. (84,30) .. controls (94.27,30) and (103.09,36.19) .. (106.94,45.05) -- (84,55) -- cycle ; \draw  [color={rgb, 255:red, 223; green, 83; blue, 107 }  ,draw opacity=1 ][line width=2.25]  (63.54,40.62) .. controls (68.07,34.2) and (75.54,30) .. (84,30) .. controls (94.27,30) and (103.09,36.19) .. (106.94,45.05) ;  
\draw  [draw opacity=0][line width=2.25]  (200.48,45.5) .. controls (203.96,35.88) and (213.18,29) .. (224,29) .. controls (232.72,29) and (240.4,33.47) .. (244.88,40.24) -- (224,54) -- cycle ; \draw  [color={rgb, 255:red, 223; green, 83; blue, 107 }  ,draw opacity=1 ][line width=2.25]  (200.48,45.5) .. controls (203.96,35.88) and (213.18,29) .. (224,29) .. controls (232.72,29) and (240.4,33.47) .. (244.88,40.24) ;  
\draw [color={rgb, 255:red, 223; green, 83; blue, 107 }  ,draw opacity=1 ][line width=2.25]  [dash pattern={on 6.75pt off 4.5pt}]  (32.1,18.91) .. controls (-7.9,29.36) and (-7.38,77.54) .. (32.28,89.41) ;
\draw  [draw opacity=0][line width=2.25]  (244.8,67.87) .. controls (240.32,74.58) and (232.68,79) .. (224,79) .. controls (213.28,79) and (204.14,72.26) .. (200.59,62.79) -- (224,54) -- cycle ; \draw  [color={rgb, 255:red, 223; green, 83; blue, 107 }  ,draw opacity=1 ][line width=2.25]  (244.8,67.87) .. controls (240.32,74.58) and (232.68,79) .. (224,79) .. controls (213.28,79) and (204.14,72.26) .. (200.59,62.79) ;  
\draw [color={rgb, 255:red, 223; green, 83; blue, 107 }  ,draw opacity=1 ][line width=2.25]  [dash pattern={on 6.75pt off 4.5pt}]  (276.1,19) .. controls (316.1,29.44) and (315.58,77.62) .. (275.92,89.5) ;
\draw [color={rgb, 255:red, 223; green, 83; blue, 107 }  ,draw opacity=1 ][line width=2.25]    (276.1,19) -- (244,39.16) ;
\draw [color={rgb, 255:red, 223; green, 83; blue, 107 }  ,draw opacity=1 ][line width=2.25]  [dash pattern={on 6.75pt off 4.5pt}]  (276.1,19) .. controls (316.1,29.44) and (315.58,77.62) .. (275.92,89.5) ;

\draw (148,16.9) node [anchor=north west][inner sep=0.75pt]  [font=\large]  {$\textcolor[rgb]{0.87,0.33,0.42}{n}$};

\end{tikzpicture} & \tikzset{every picture/.style={line width=0.5pt}} 

\begin{tikzpicture}[x=0.55pt,y=0.55pt,yscale=-1,xscale=1]

\draw  [fill={rgb, 255:red, 0; green, 0; blue, 0 }  ,fill opacity=1 ] (59,56) .. controls (59,42.19) and (70.19,31) .. (84,31) .. controls (97.81,31) and (109,42.19) .. (109,56) .. controls (109,69.81) and (97.81,81) .. (84,81) .. controls (70.19,81) and (59,69.81) .. (59,56) -- cycle ;
\draw  [fill={rgb, 255:red, 0; green, 0; blue, 0 }  ,fill opacity=1 ] (199,55) .. controls (199,41.19) and (210.19,30) .. (224,30) .. controls (237.81,30) and (249,41.19) .. (249,55) .. controls (249,68.81) and (237.81,80) .. (224,80) .. controls (210.19,80) and (199,68.81) .. (199,55) -- cycle ;
\draw [color={rgb, 255:red, 223; green, 83; blue, 107 }  ,draw opacity=1 ][line width=2.25]    (32.1,19.91) -- (64.5,40.31) ;
\draw [color={rgb, 255:red, 223; green, 83; blue, 107 }  ,draw opacity=1 ][line width=2.25]    (244.1,70.31) -- (276.5,90.71) ;
\draw [color={rgb, 255:red, 223; green, 83; blue, 107 }  ,draw opacity=1 ][line width=2.25]    (62.83,69.3) -- (30.43,89.7) ;
\draw    (59.82,61.43) -- (27.42,81.83) ;
\draw    (281.07,29.6) -- (248.67,50) ;
\draw    (248.1,61.01) -- (280.5,81.41) ;
\draw    (27.1,28.93) -- (59.5,49.33) ;
\draw  [draw opacity=0][line width=2.25]  (63.72,41.37) .. controls (68.26,35.09) and (75.65,31) .. (84,31) .. controls (97.81,31) and (109,42.19) .. (109,56) .. controls (109,69.81) and (97.81,81) .. (84,81) .. controls (75.08,81) and (67.25,76.33) .. (62.83,69.3) -- (84,56) -- cycle ; \draw  [color={rgb, 255:red, 223; green, 83; blue, 107 }  ,draw opacity=1 ][line width=2.25]  (63.72,41.37) .. controls (68.26,35.09) and (75.65,31) .. (84,31) .. controls (97.81,31) and (109,42.19) .. (109,56) .. controls (109,69.81) and (97.81,81) .. (84,81) .. controls (75.08,81) and (67.25,76.33) .. (62.83,69.3) ;  
\draw  [draw opacity=0][line width=2.25]  (244.64,69.11) .. controls (240.14,75.68) and (232.57,80) .. (224,80) .. controls (210.19,80) and (199,68.81) .. (199,55) .. controls (199,41.19) and (210.19,30) .. (224,30) .. controls (232.72,30) and (240.4,34.47) .. (244.88,41.24) -- (224,55) -- cycle ; \draw  [color={rgb, 255:red, 223; green, 83; blue, 107 }  ,draw opacity=1 ][line width=2.25]  (244.64,69.11) .. controls (240.14,75.68) and (232.57,80) .. (224,80) .. controls (210.19,80) and (199,68.81) .. (199,55) .. controls (199,41.19) and (210.19,30) .. (224,30) .. controls (232.72,30) and (240.4,34.47) .. (244.88,41.24) ;  
\draw [color={rgb, 255:red, 223; green, 83; blue, 107 }  ,draw opacity=1 ][line width=2.25]  [dash pattern={on 6.75pt off 4.5pt}]  (32.1,19.91) .. controls (-7.9,30.36) and (-7.38,78.54) .. (32.28,90.41) ;
\draw [color={rgb, 255:red, 223; green, 83; blue, 107 }  ,draw opacity=1 ][line width=2.25]  [dash pattern={on 6.75pt off 4.5pt}]  (276.1,20) .. controls (316.1,30.44) and (315.58,78.62) .. (275.92,90.5) ;
\draw [color={rgb, 255:red, 223; green, 83; blue, 107 }  ,draw opacity=1 ][line width=2.25]    (276.1,20) -- (244,40.16) ;
\draw [color={rgb, 255:red, 223; green, 83; blue, 107 }  ,draw opacity=1 ][line width=2.25]  [dash pattern={on 6.75pt off 4.5pt}]  (276.1,20) .. controls (316.1,30.44) and (315.58,78.62) .. (275.92,90.5) ;

\draw (134.3,19.53) node [anchor=north west][inner sep=0.75pt]  [font=\large,color={rgb, 255:red, 223; green, 83; blue, 107 }  ,opacity=1 ]  {$\textcolor[rgb]{0.87,0.33,0.42}{n+1}$};

\end{tikzpicture} \\
        \hline
        \tikzset{every picture/.style={line width=0.5pt}} 

\begin{tikzpicture}[x=0.55pt,y=0.55pt,yscale=-1,xscale=1]

\draw  [fill={rgb, 255:red, 0; green, 0; blue, 0 }  ,fill opacity=1 ] (61,56) .. controls (61,42.19) and (72.19,31) .. (86,31) .. controls (99.81,31) and (111,42.19) .. (111,56) .. controls (111,69.81) and (99.81,81) .. (86,81) .. controls (72.19,81) and (61,69.81) .. (61,56) -- cycle ;
\draw  [fill={rgb, 255:red, 0; green, 0; blue, 0 }  ,fill opacity=1 ] (201,55) .. controls (201,41.19) and (212.19,30) .. (226,30) .. controls (239.81,30) and (251,41.19) .. (251,55) .. controls (251,68.81) and (239.81,80) .. (226,80) .. controls (212.19,80) and (201,68.81) .. (201,55) -- cycle ;
\draw [color={rgb, 255:red, 223; green, 83; blue, 107 }  ,draw opacity=1 ][line width=2.25]    (107.9,44.91) -- (202.9,44.91) ;
\draw [color={rgb, 255:red, 223; green, 83; blue, 107 }  ,draw opacity=1 ][line width=2.25]    (108.7,65.71) -- (203.7,65.71) ;
\draw [color={rgb, 255:red, 223; green, 83; blue, 107 }  ,draw opacity=1 ][line width=2.25]    (34.1,19.91) -- (66.5,40.31) ;
\draw [color={rgb, 255:red, 223; green, 83; blue, 107 }  ,draw opacity=1 ][line width=2.25]    (246.1,70.31) -- (278.5,90.71) ;
\draw [color={rgb, 255:red, 223; green, 83; blue, 107 }  ,draw opacity=1 ][line width=2.25]    (65.7,70.4) -- (33.3,90.8) ;
\draw    (61.82,61.43) -- (29.42,81.83) ;
\draw    (283.07,29.6) -- (250.67,50) ;
\draw    (250.1,61.01) -- (282.5,81.41) ;
\draw    (29.1,28.93) -- (61.5,49.33) ;
\draw  [draw opacity=0][line width=2.25]  (109.57,64.35) .. controls (106.13,74.05) and (96.88,81) .. (86,81) .. controls (77.08,81) and (69.25,76.33) .. (64.83,69.3) -- (86,56) -- cycle ; \draw  [color={rgb, 255:red, 223; green, 83; blue, 107 }  ,draw opacity=1 ][line width=2.25]  (109.57,64.35) .. controls (106.13,74.05) and (96.88,81) .. (86,81) .. controls (77.08,81) and (69.25,76.33) .. (64.83,69.3) ;  
\draw  [draw opacity=0][line width=2.25]  (65.54,41.62) .. controls (70.07,35.2) and (77.54,31) .. (86,31) .. controls (96.27,31) and (105.09,37.19) .. (108.94,46.05) -- (86,56) -- cycle ; \draw  [color={rgb, 255:red, 223; green, 83; blue, 107 }  ,draw opacity=1 ][line width=2.25]  (65.54,41.62) .. controls (70.07,35.2) and (77.54,31) .. (86,31) .. controls (96.27,31) and (105.09,37.19) .. (108.94,46.05) ;  
\draw  [draw opacity=0][line width=2.25]  (202.48,46.5) .. controls (205.96,36.88) and (215.18,30) .. (226,30) .. controls (234.72,30) and (242.4,34.47) .. (246.88,41.24) -- (226,55) -- cycle ; \draw  [color={rgb, 255:red, 223; green, 83; blue, 107 }  ,draw opacity=1 ][line width=2.25]  (202.48,46.5) .. controls (205.96,36.88) and (215.18,30) .. (226,30) .. controls (234.72,30) and (242.4,34.47) .. (246.88,41.24) ;  
\draw [color={rgb, 255:red, 223; green, 83; blue, 107 }  ,draw opacity=1 ][line width=2.25]  [dash pattern={on 6.75pt off 4.5pt}]  (34.1,19.91) .. controls (70.25,0.66) and (237,0.91) .. (278.1,20) ;
\draw [color={rgb, 255:red, 223; green, 83; blue, 107 }  ,draw opacity=1 ][line width=2.25]  [dash pattern={on 6.75pt off 4.5pt}]  (34.5,90.8) .. controls (70.65,110.05) and (237.4,109.8) .. (278.5,90.71) ;
\draw  [draw opacity=0][line width=2.25]  (246.8,68.87) .. controls (242.32,75.58) and (234.68,80) .. (226,80) .. controls (215.28,80) and (206.14,73.26) .. (202.59,63.79) -- (226,55) -- cycle ; \draw  [color={rgb, 255:red, 223; green, 83; blue, 107 }  ,draw opacity=1 ][line width=2.25]  (246.8,68.87) .. controls (242.32,75.58) and (234.68,80) .. (226,80) .. controls (215.28,80) and (206.14,73.26) .. (202.59,63.79) ;  
\draw [color={rgb, 255:red, 223; green, 83; blue, 107 }  ,draw opacity=1 ][line width=2.25]    (278.1,20) -- (246,40.16) ;
\draw [color={rgb, 255:red, 223; green, 83; blue, 107 }  ,draw opacity=1 ][line width=2.25]  [dash pattern={on 6.75pt off 4.5pt}]  (33.3,90.8) .. controls (69.45,110.05) and (236.2,109.8) .. (277.3,90.71) ;

\draw (7,42.4) node [anchor=north west][inner sep=0.75pt]  [font=\large,color={rgb, 255:red, 223; green, 83; blue, 107 }  ,opacity=1 ]  {$\textcolor[rgb]{0.87,0.33,0.42}{n}$};

\end{tikzpicture} & \tikzset{every picture/.style={line width=0.5pt}} 

\begin{tikzpicture}[x=0.55pt,y=0.55pt,yscale=-1,xscale=1]

\draw  [fill={rgb, 255:red, 0; green, 0; blue, 0 }  ,fill opacity=1 ] (61,56) .. controls (61,42.19) and (72.19,31) .. (86,31) .. controls (99.81,31) and (111,42.19) .. (111,56) .. controls (111,69.81) and (99.81,81) .. (86,81) .. controls (72.19,81) and (61,69.81) .. (61,56) -- cycle ;
\draw  [fill={rgb, 255:red, 0; green, 0; blue, 0 }  ,fill opacity=1 ] (201,55) .. controls (201,41.19) and (212.19,30) .. (226,30) .. controls (239.81,30) and (251,41.19) .. (251,55) .. controls (251,68.81) and (239.81,80) .. (226,80) .. controls (212.19,80) and (201,68.81) .. (201,55) -- cycle ;
\draw [color={rgb, 255:red, 223; green, 83; blue, 107 }  ,draw opacity=1 ][line width=2.25]    (34.1,19.91) -- (66.5,40.31) ;
\draw [color={rgb, 255:red, 223; green, 83; blue, 107 }  ,draw opacity=1 ][line width=2.25]    (246.1,70.31) -- (278.5,90.71) ;
\draw [color={rgb, 255:red, 223; green, 83; blue, 107 }  ,draw opacity=1 ][line width=2.25]    (66.9,70.4) -- (34.5,90.8) ;
\draw    (61.82,61.43) -- (29.42,81.83) ;
\draw    (283.07,29.6) -- (250.67,50) ;
\draw    (250.1,61.01) -- (282.5,81.41) ;
\draw    (29.1,28.93) -- (61.5,49.33) ;
\draw  [draw opacity=0][line width=2.25]  (65.54,41.62) .. controls (70.07,35.2) and (77.54,31) .. (86,31) .. controls (99.81,31) and (111,42.19) .. (111,56) .. controls (111,69.81) and (99.81,81) .. (86,81) .. controls (77.05,81) and (69.2,76.3) .. (64.79,69.23) -- (86,56) -- cycle ; \draw  [color={rgb, 255:red, 223; green, 83; blue, 107 }  ,draw opacity=1 ][line width=2.25]  (65.54,41.62) .. controls (70.07,35.2) and (77.54,31) .. (86,31) .. controls (99.81,31) and (111,42.19) .. (111,56) .. controls (111,69.81) and (99.81,81) .. (86,81) .. controls (77.05,81) and (69.2,76.3) .. (64.79,69.23) ;  
\draw  [draw opacity=0][line width=2.25]  (246.52,69.29) .. controls (242,75.76) and (234.49,80) .. (226,80) .. controls (212.19,80) and (201,68.81) .. (201,55) .. controls (201,41.19) and (212.19,30) .. (226,30) .. controls (234.72,30) and (242.4,34.47) .. (246.88,41.24) -- (226,55) -- cycle ; \draw  [color={rgb, 255:red, 223; green, 83; blue, 107 }  ,draw opacity=1 ][line width=2.25]  (246.52,69.29) .. controls (242,75.76) and (234.49,80) .. (226,80) .. controls (212.19,80) and (201,68.81) .. (201,55) .. controls (201,41.19) and (212.19,30) .. (226,30) .. controls (234.72,30) and (242.4,34.47) .. (246.88,41.24) ;  
\draw [color={rgb, 255:red, 223; green, 83; blue, 107 }  ,draw opacity=1 ][line width=2.25]  [dash pattern={on 6.75pt off 4.5pt}]  (34.1,19.91) .. controls (70.25,0.66) and (237,0.91) .. (278.1,20) ;
\draw [color={rgb, 255:red, 223; green, 83; blue, 107 }  ,draw opacity=1 ][line width=2.25]  [dash pattern={on 6.75pt off 4.5pt}]  (34.5,90.8) .. controls (70.65,110.05) and (237.4,109.8) .. (278.5,90.71) ;
\draw [color={rgb, 255:red, 223; green, 83; blue, 107 }  ,draw opacity=1 ][line width=2.25]    (278.1,20) -- (246,40.16) ;
\draw [color={rgb, 255:red, 223; green, 83; blue, 107 }  ,draw opacity=1 ][line width=2.25]  [dash pattern={on 6.75pt off 4.5pt}]  (34.5,90.8) .. controls (70.65,110.05) and (237.4,109.8) .. (278.5,90.71) ;

\draw (132.3,23.4) node [anchor=north west][inner sep=0.75pt]  [font=\large,color={rgb, 255:red, 223; green, 83; blue, 107 }  ,opacity=1 ]  {$\textcolor[rgb]{0.87,0.33,0.42}{n+1}$};

\end{tikzpicture} \\
        \hline 
        \tikzset{every picture/.style={line width=0.5pt}} 

\begin{tikzpicture}[x=0.55pt,y=0.55pt,yscale=-1,xscale=1]

\draw  [fill={rgb, 255:red, 0; green, 0; blue, 0 }  ,fill opacity=1 ] (61,56) .. controls (61,42.19) and (72.19,31) .. (86,31) .. controls (99.81,31) and (111,42.19) .. (111,56) .. controls (111,69.81) and (99.81,81) .. (86,81) .. controls (72.19,81) and (61,69.81) .. (61,56) -- cycle ;
\draw  [fill={rgb, 255:red, 0; green, 0; blue, 0 }  ,fill opacity=1 ] (201,55) .. controls (201,41.19) and (212.19,30) .. (226,30) .. controls (239.81,30) and (251,41.19) .. (251,55) .. controls (251,68.81) and (239.81,80) .. (226,80) .. controls (212.19,80) and (201,68.81) .. (201,55) -- cycle ;
\draw [color={rgb, 255:red, 223; green, 83; blue, 107 }  ,draw opacity=1 ][line width=2.25]    (107.9,44.91) -- (202.9,44.91) ;
\draw [color={rgb, 255:red, 40; green, 226; blue, 229 }  ,draw opacity=1 ][line width=2.25]    (108.7,65.71) -- (203.7,65.71) ;
\draw [color={rgb, 255:red, 223; green, 83; blue, 107 }  ,draw opacity=1 ][line width=2.25]    (34.1,19.91) -- (66.5,40.31) ;
\draw [color={rgb, 255:red, 40; green, 226; blue, 229 }  ,draw opacity=1 ][line width=2.25]    (246.1,70.31) -- (278.5,90.71) ;
\draw [color={rgb, 255:red, 40; green, 226; blue, 229 }  ,draw opacity=1 ][line width=2.25]    (65.7,70.4) -- (33.3,90.8) ;
\draw    (61.82,61.43) -- (29.42,81.83) ;
\draw    (283.07,29.6) -- (250.67,50) ;
\draw    (250.1,61.01) -- (282.5,81.41) ;
\draw    (29.1,28.93) -- (61.5,49.33) ;
\draw  [draw opacity=0][line width=2.25]  (109.57,64.35) .. controls (106.13,74.05) and (96.88,81) .. (86,81) .. controls (77.08,81) and (69.25,76.33) .. (64.83,69.3) -- (86,56) -- cycle ; \draw  [color={rgb, 255:red, 40; green, 226; blue, 229 }  ,draw opacity=1 ][line width=2.25]  (109.57,64.35) .. controls (106.13,74.05) and (96.88,81) .. (86,81) .. controls (77.08,81) and (69.25,76.33) .. (64.83,69.3) ;  
\draw  [draw opacity=0][line width=2.25]  (65.54,41.62) .. controls (70.07,35.2) and (77.54,31) .. (86,31) .. controls (96.27,31) and (105.09,37.19) .. (108.94,46.05) -- (86,56) -- cycle ; \draw  [color={rgb, 255:red, 223; green, 83; blue, 107 }  ,draw opacity=1 ][line width=2.25]  (65.54,41.62) .. controls (70.07,35.2) and (77.54,31) .. (86,31) .. controls (96.27,31) and (105.09,37.19) .. (108.94,46.05) ;  
\draw  [draw opacity=0][line width=2.25]  (202.48,46.5) .. controls (205.96,36.88) and (215.18,30) .. (226,30) .. controls (234.72,30) and (242.4,34.47) .. (246.88,41.24) -- (226,55) -- cycle ; \draw  [color={rgb, 255:red, 223; green, 83; blue, 107 }  ,draw opacity=1 ][line width=2.25]  (202.48,46.5) .. controls (205.96,36.88) and (215.18,30) .. (226,30) .. controls (234.72,30) and (242.4,34.47) .. (246.88,41.24) ;  
\draw [color={rgb, 255:red, 223; green, 83; blue, 107 }  ,draw opacity=1 ][line width=2.25]  [dash pattern={on 6.75pt off 4.5pt}]  (34.1,19.91) .. controls (70.25,0.66) and (237,0.91) .. (278.1,20) ;
\draw [color={rgb, 255:red, 40; green, 226; blue, 229 }  ,draw opacity=1 ][line width=2.25]  [dash pattern={on 6.75pt off 4.5pt}]  (34.5,90.8) .. controls (70.65,110.05) and (237.4,109.8) .. (278.5,90.71) ;
\draw  [draw opacity=0][line width=2.25]  (246.8,68.87) .. controls (242.32,75.58) and (234.68,80) .. (226,80) .. controls (215.28,80) and (206.14,73.26) .. (202.59,63.79) -- (226,55) -- cycle ; \draw  [color={rgb, 255:red, 40; green, 226; blue, 229 }  ,draw opacity=1 ][line width=2.25]  (246.8,68.87) .. controls (242.32,75.58) and (234.68,80) .. (226,80) .. controls (215.28,80) and (206.14,73.26) .. (202.59,63.79) ;  
\draw [color={rgb, 255:red, 223; green, 83; blue, 107 }  ,draw opacity=1 ][line width=2.25]    (278.1,20) -- (246,40.16) ;
\draw [color={rgb, 255:red, 40; green, 226; blue, 229 }  ,draw opacity=1 ][line width=2.25]  [dash pattern={on 6.75pt off 4.5pt}]  (33.3,90.8) .. controls (69.45,110.05) and (236.2,109.8) .. (277.3,90.71) ;

\draw (4,10.4) node [anchor=north west][inner sep=0.75pt]  [font=\large,color={rgb, 255:red, 223; green, 83; blue, 107 }  ,opacity=1 ]  {$\textcolor[rgb]{0.87,0.33,0.42}{n}$};
\draw (4,75.4) node [anchor=north west][inner sep=0.75pt]  [font=\large,color={rgb, 255:red, 40; green, 226; blue, 229 }  ,opacity=1 ]  {$\textcolor[rgb]{0.16,0.89,0.9}{m}$};

\end{tikzpicture} & \tikzset{every picture/.style={line width=0.5pt}} 

\begin{tikzpicture}[x=0.55pt,y=0.55pt,yscale=-1,xscale=1]

\draw  [fill={rgb, 255:red, 0; green, 0; blue, 0 }  ,fill opacity=1 ] (61,56) .. controls (61,42.19) and (72.19,31) .. (86,31) .. controls (99.81,31) and (111,42.19) .. (111,56) .. controls (111,69.81) and (99.81,81) .. (86,81) .. controls (72.19,81) and (61,69.81) .. (61,56) -- cycle ;
\draw  [fill={rgb, 255:red, 0; green, 0; blue, 0 }  ,fill opacity=1 ] (201,55) .. controls (201,41.19) and (212.19,30) .. (226,30) .. controls (239.81,30) and (251,41.19) .. (251,55) .. controls (251,68.81) and (239.81,80) .. (226,80) .. controls (212.19,80) and (201,68.81) .. (201,55) -- cycle ;
\draw [color={rgb, 255:red, 205; green, 11; blue, 188 }  ,draw opacity=1 ][line width=2.25]    (34.1,19.91) -- (66.5,40.31) ;
\draw [color={rgb, 255:red, 205; green, 11; blue, 188 }  ,draw opacity=1 ][line width=2.25]    (246.1,70.31) -- (278.5,90.71) ;
\draw [color={rgb, 255:red, 205; green, 11; blue, 188 }  ,draw opacity=1 ][line width=2.25]    (65.7,70.4) -- (33.3,90.8) ;
\draw    (61.82,61.43) -- (29.42,81.83) ;
\draw    (283.07,29.6) -- (250.67,50) ;
\draw    (250.1,61.01) -- (282.5,81.41) ;
\draw    (29.1,28.93) -- (61.5,49.33) ;
\draw  [draw opacity=0][line width=2.25]  (65.54,41.62) .. controls (70.07,35.2) and (77.54,31) .. (86,31) .. controls (99.81,31) and (111,42.19) .. (111,56) .. controls (111,69.81) and (99.81,81) .. (86,81) .. controls (77.05,81) and (69.2,76.3) .. (64.79,69.23) -- (86,56) -- cycle ; \draw  [color={rgb, 255:red, 205; green, 11; blue, 188 }  ,draw opacity=1 ][line width=2.25]  (65.54,41.62) .. controls (70.07,35.2) and (77.54,31) .. (86,31) .. controls (99.81,31) and (111,42.19) .. (111,56) .. controls (111,69.81) and (99.81,81) .. (86,81) .. controls (77.05,81) and (69.2,76.3) .. (64.79,69.23) ;  
\draw  [draw opacity=0][line width=2.25]  (246.52,69.29) .. controls (242,75.76) and (234.49,80) .. (226,80) .. controls (212.19,80) and (201,68.81) .. (201,55) .. controls (201,41.19) and (212.19,30) .. (226,30) .. controls (234.72,30) and (242.4,34.47) .. (246.88,41.24) -- (226,55) -- cycle ; \draw  [color={rgb, 255:red, 205; green, 11; blue, 188 }  ,draw opacity=1 ][line width=2.25]  (246.52,69.29) .. controls (242,75.76) and (234.49,80) .. (226,80) .. controls (212.19,80) and (201,68.81) .. (201,55) .. controls (201,41.19) and (212.19,30) .. (226,30) .. controls (234.72,30) and (242.4,34.47) .. (246.88,41.24) ;  
\draw [color={rgb, 255:red, 205; green, 11; blue, 188 }  ,draw opacity=1 ][line width=2.25]    (278.1,20) -- (246,40.16) ;
\draw [color={rgb, 255:red, 205; green, 11; blue, 188 }  ,draw opacity=1 ][line width=2.25]  [dash pattern={on 6.75pt off 4.5pt}]  (34.1,19.91) .. controls (70.25,0.66) and (237,0.91) .. (278.1,20) ;
\draw [color={rgb, 255:red, 205; green, 11; blue, 188 }  ,draw opacity=1 ][line width=2.25]  [dash pattern={on 6.75pt off 4.5pt}]  (33.3,90.8) .. controls (69.45,110.05) and (236.2,109.8) .. (277.3,90.71) ;

\draw (130,25.4) node [anchor=north west][inner sep=0.75pt]  [font=\large,color={rgb, 255:red, 205; green, 11; blue, 188 }  ,opacity=1 ]  {$\textcolor[rgb]{0.8,0.04,0.74}{n+m}$};

\end{tikzpicture} \\
        \hline
    \end{tabular}
    \caption{Deleting an edge in a packaged ribbon graph.}
    \label{Del}
\end{table}

\begin{definition}
    Let $\pG=(\bG,\V,\wV,\F,\wF)$ be a packaged ribbon graph, $V$ be its vertex set, and $e$ be an edge in $\bG$. We use $\pG/e$ to denote the packaged ribbon graph $(\bG/e,\V',\wV',\F',\wF')$ obtained by \emph{contracting} $e$ and define it as follows. There is a natural correspondence between the boundary components of $\bG$ and those of $\bG/e$. This correspondence induces a partition $\F'$ on the boundary components of $\bG/e$ and a weighting $\wF'$. For the vertices, $\V'$ and $\wV'$ are defined as follows.
    \begin{enumerate}
        \item\label{ccase1} If $e$ is a loop incident to $u\in V$,         
        then contract $e$ in $\bG$ and let $u'$, $v'$ denote the two vertices formed by the contraction. Obtain the partition $\V'$ from $\V$ by replacing $[u]$ with $[u']:=[u]\cup \{u',v'\}\setminus \{u\}$. The other blocks are unchanged. Define
        \[\wV'([x]):=
        \begin{cases}
        \wV([x])+1 & \text{ if } [x]=[u'],\\
        \wV([x]) & \text{ if } [x]\neq [u'].
        \end{cases}\]
        
        \item\label{ccase2} If $e$ is a non-loop edge incident to $u,v\in V$ and $[u]=[v]$, then contract $e$ in $\bG$ and let $u'$ denote the vertex formed by the contraction. Obtain the partition $\V'$ from $\V$ by replacing $[u]$ with $[u']:=[u]\cup \{u'\}\setminus \{u,v\}$. The other blocks are unchanged. Define
        \[\wV'([x]):=
        \begin{cases}
        \wV([x])+1 & \text{ if } [x]=[u'],\\
        \wV([x]) & \text{ if } [x]\neq [u'].
        \end{cases}\]
        
        \item\label{ccase3} If $e$ is a non-loop edge incident to $u,v\in V$ and $[u]\neq [v]$, then contract $e$ in $\bG$ and let $u'$ denote the vertex formed by the contraction. Obtain the partition $\V'$ from $\V$ by removing $[u]$ and $[v]$, and inserting $[u']:=[u]\cup [v]\cup\{u'\}\setminus \{u,v\}$. The other blocks are unchanged. Define
        \[\wV'([x]):=
        \begin{cases}
        \wV([u])+\wV([v]) & \text{ if } [x]=[u'],\\
        \wV([x]) & \text{ if } [x]\neq [u'].
        \end{cases}\]
    \end{enumerate}
    
\end{definition}

\begin{table}
    \centering
    \begin{tabular}{| c | c |}
        \hline
        $\pG$ & $\pG/e$ \\
        \hline
          \tikzset{every picture/.style={line width=0.5pt}} 

\begin{tikzpicture}[x=0.55pt,y=0.55pt,yscale=-1,xscale=1]

\draw  [draw opacity=0] (144.6,80.47) .. controls (151.46,95.38) and (164.29,105.41) .. (178.98,105.41) .. controls (200.89,105.41) and (218.65,83.09) .. (218.65,55.56) .. controls (218.65,28.03) and (200.89,5.72) .. (178.98,5.72) .. controls (164.79,5.72) and (152.35,15.06) .. (145.34,29.12) -- (178.98,55.56) -- cycle ; \draw   (144.6,80.47) .. controls (151.46,95.38) and (164.29,105.41) .. (178.98,105.41) .. controls (200.89,105.41) and (218.65,83.09) .. (218.65,55.56) .. controls (218.65,28.03) and (200.89,5.72) .. (178.98,5.72) .. controls (164.79,5.72) and (152.35,15.06) .. (145.34,29.12) ;  
\draw  [draw opacity=0] (157.07,80.87) .. controls (162.47,88.52) and (170.28,93.32) .. (178.98,93.32) .. controls (195.28,93.32) and (208.5,76.42) .. (208.5,55.56) .. controls (208.5,34.71) and (195.28,17.81) .. (178.98,17.81) .. controls (170.66,17.81) and (163.15,22.2) .. (157.79,29.27) -- (178.98,55.56) -- cycle ; \draw   (157.07,80.87) .. controls (162.47,88.52) and (170.28,93.32) .. (178.98,93.32) .. controls (195.28,93.32) and (208.5,76.42) .. (208.5,55.56) .. controls (208.5,34.71) and (195.28,17.81) .. (178.98,17.81) .. controls (170.66,17.81) and (163.15,22.2) .. (157.79,29.27) ;  
\draw    (194.38,23.79) -- (171.3,38) ;
\draw    (172.35,70.66) -- (196.63,85.54) ;
\draw  [fill={rgb, 255:red, 223; green, 83; blue, 107 }  ,fill opacity=1 ] (124.7,55.06) .. controls (124.7,40.51) and (136.5,28.71) .. (151.05,28.71) .. controls (165.6,28.71) and (177.4,40.51) .. (177.4,55.06) .. controls (177.4,69.62) and (165.6,81.41) .. (151.05,81.41) .. controls (136.5,81.41) and (124.7,69.62) .. (124.7,55.06) -- cycle ;
\draw    (97.85,18.78) -- (130.25,39.18) ;
\draw    (129.45,69.26) -- (97.05,89.66) ;
\draw    (125.48,59.8) -- (93.08,80.2) ;
\draw    (92.77,28.13) -- (125.17,48.53) ;
\draw    (201.5,30.91) -- (176.05,47) ;
\draw    (176.35,62.63) -- (202,78.16) ;

\draw (124,82.4) node [anchor=north west][inner sep=0.75pt]  [font=\large,color={rgb, 255:red, 223; green, 83; blue, 107 }  ,opacity=1 ]  {$\textcolor[rgb]{0.87,0.33,0.42}{n}$};

\end{tikzpicture} & \tikzset{every picture/.style={line width=0.5pt}} 

\begin{tikzpicture}[x=0.55pt,y=0.55pt,yscale=-1,xscale=1]

\draw  [fill={rgb, 255:red, 223; green, 83; blue, 107 }  ,fill opacity=1 ] (61,56) .. controls (61,42.19) and (72.19,31) .. (86,31) .. controls (99.81,31) and (111,42.19) .. (111,56) .. controls (111,69.81) and (99.81,81) .. (86,81) .. controls (72.19,81) and (61,69.81) .. (61,56) -- cycle ;
\draw  [fill={rgb, 255:red, 223; green, 83; blue, 107 }  ,fill opacity=1 ] (201,55) .. controls (201,41.19) and (212.19,30) .. (226,30) .. controls (239.81,30) and (251,41.19) .. (251,55) .. controls (251,68.81) and (239.81,80) .. (226,80) .. controls (212.19,80) and (201,68.81) .. (201,55) -- cycle ;
\draw    (34.1,19.91) -- (66.5,40.31) ;
\draw    (246.1,70.31) -- (278.5,90.71) ;
\draw    (65.7,70.4) -- (33.3,90.8) ;
\draw    (278.1,20) -- (245.7,40.4) ;
\draw    (62.07,60.93) -- (29.67,81.33) ;
\draw    (283.07,29.6) -- (250.67,50) ;
\draw    (250.1,61.01) -- (282.5,81.41) ;
\draw    (29.1,28.93) -- (61.5,49.33) ;

\draw (137,81.4) node [anchor=north west][inner sep=0.75pt]  [font=\large,color={rgb, 255:red, 223; green, 83; blue, 107 }  ,opacity=1 ]  {$\textcolor[rgb]{0.87,0.33,0.42}{n+1}$};

\end{tikzpicture} \\
        \hline
        \tikzset{every picture/.style={line width=0.5pt}} 

\begin{tikzpicture}[x=0.55pt,y=0.55pt,yscale=-1,xscale=1]

\draw  [fill={rgb, 255:red, 223; green, 83; blue, 107 }  ,fill opacity=1 ] (61,56) .. controls (61,42.19) and (72.19,31) .. (86,31) .. controls (99.81,31) and (111,42.19) .. (111,56) .. controls (111,69.81) and (99.81,81) .. (86,81) .. controls (72.19,81) and (61,69.81) .. (61,56) -- cycle ;
\draw  [fill={rgb, 255:red, 223; green, 83; blue, 107 }  ,fill opacity=1 ] (201,55) .. controls (201,41.19) and (212.19,30) .. (226,30) .. controls (239.81,30) and (251,41.19) .. (251,55) .. controls (251,68.81) and (239.81,80) .. (226,80) .. controls (212.19,80) and (201,68.81) .. (201,55) -- cycle ;
\draw    (107.9,44.91) -- (202.9,44.91) ;
\draw    (108.7,65.71) -- (203.7,65.71) ;
\draw    (34.1,19.91) -- (66.5,40.31) ;
\draw    (246.1,70.31) -- (278.5,90.71) ;
\draw    (65.7,70.4) -- (33.3,90.8) ;
\draw    (278.1,20) -- (245.7,40.4) ;
\draw    (61.82,61.43) -- (29.42,81.83) ;
\draw    (283.07,29.6) -- (250.67,50) ;
\draw    (250.1,61.01) -- (282.5,81.41) ;
\draw    (29.1,28.93) -- (61.5,49.33) ;

\draw (149,81.4) node [anchor=north west][inner sep=0.75pt]  [font=\large,color={rgb, 255:red, 223; green, 83; blue, 107 }  ,opacity=1 ]  {$\textcolor[rgb]{0.87,0.33,0.42}{n}$};

\end{tikzpicture} & \tikzset{every picture/.style={line width=0.5pt}} 

\begin{tikzpicture}[x=0.55pt,y=0.55pt,yscale=-1,xscale=1]

\draw  [fill={rgb, 255:red, 223; green, 83; blue, 107 }  ,fill opacity=1 ] (131.7,54.06) .. controls (131.7,39.51) and (143.5,27.71) .. (158.05,27.71) .. controls (172.6,27.71) and (184.4,39.51) .. (184.4,54.06) .. controls (184.4,68.62) and (172.6,80.41) .. (158.05,80.41) .. controls (143.5,80.41) and (131.7,68.62) .. (131.7,54.06) -- cycle ;
\draw    (104.6,18.36) -- (137,38.76) ;
\draw    (136.2,68.85) -- (103.8,89.25) ;
\draw    (132.23,59.38) -- (99.83,79.78) ;
\draw    (99.52,27.72) -- (131.92,48.12) ;
\draw    (179.85,68.85) -- (212.25,89.25) ;
\draw    (211.85,18.54) -- (179.45,38.94) ;
\draw    (216.15,27.8) -- (183.75,48.2) ;
\draw    (184.18,59.8) -- (216.58,80.2) ;

\draw (138,83.4) node [anchor=north west][inner sep=0.75pt]  [font=\large,color={rgb, 255:red, 223; green, 83; blue, 107 }  ,opacity=1 ]  {$\textcolor[rgb]{0.87,0.33,0.42}{n+1}$};

\end{tikzpicture} \\
        \hline
        \tikzset{every picture/.style={line width=0.5pt}} 

\begin{tikzpicture}[x=0.55pt,y=0.55pt,yscale=-1,xscale=1]

\draw  [fill={rgb, 255:red, 223; green, 83; blue, 107 }  ,fill opacity=1 ] (61,56) .. controls (61,42.19) and (72.19,31) .. (86,31) .. controls (99.81,31) and (111,42.19) .. (111,56) .. controls (111,69.81) and (99.81,81) .. (86,81) .. controls (72.19,81) and (61,69.81) .. (61,56) -- cycle ;
\draw  [fill={rgb, 255:red, 40; green, 226; blue, 229 }  ,fill opacity=1 ] (201,55) .. controls (201,41.19) and (212.19,30) .. (226,30) .. controls (239.81,30) and (251,41.19) .. (251,55) .. controls (251,68.81) and (239.81,80) .. (226,80) .. controls (212.19,80) and (201,68.81) .. (201,55) -- cycle ;
\draw    (107.9,44.91) -- (202.9,44.91) ;
\draw    (108.7,65.71) -- (203.7,65.71) ;
\draw    (34.1,19.91) -- (66.5,40.31) ;
\draw    (246.1,70.31) -- (278.5,90.71) ;
\draw    (65.7,70.4) -- (33.3,90.8) ;
\draw    (278.1,20) -- (245.7,40.4) ;
\draw    (282.4,29.27) -- (250,49.67) ;
\draw    (61.73,60.93) -- (29.33,81.33) ;
\draw    (250.43,61.27) -- (282.83,81.67) ;
\draw    (29.02,29.27) -- (61.42,49.67) ;

\draw (80,83.4) node [anchor=north west][inner sep=0.75pt]  [font=\large,color={rgb, 255:red, 223; green, 83; blue, 107 }  ,opacity=1 ]  {$\textcolor[rgb]{0.87,0.33,0.42}{{\displaystyle n}}$};
\draw (219,83.4) node [anchor=north west][inner sep=0.75pt]  [font=\large,color={rgb, 255:red, 40; green, 226; blue, 229 }  ,opacity=1 ]  {$\textcolor[rgb]{0.16,0.89,0.9}{m}$};

\end{tikzpicture} & \tikzset{every picture/.style={line width=0.5pt}} 

\begin{tikzpicture}[x=0.55pt,y=0.55pt,yscale=-1,xscale=1]

\draw  [fill={rgb, 255:red, 205; green, 11; blue, 188 }  ,fill opacity=1 ] (129.7,55.06) .. controls (129.7,40.51) and (141.5,28.71) .. (156.05,28.71) .. controls (170.6,28.71) and (182.4,40.51) .. (182.4,55.06) .. controls (182.4,69.62) and (170.6,81.41) .. (156.05,81.41) .. controls (141.5,81.41) and (129.7,69.62) .. (129.7,55.06) -- cycle ;
\draw    (102.6,19.36) -- (135,39.76) ;
\draw    (134.2,69.85) -- (101.8,90.25) ;
\draw    (130.23,60.38) -- (97.83,80.78) ;
\draw    (97.52,28.72) -- (129.92,49.12) ;
\draw    (177.85,69.85) -- (210.25,90.25) ;
\draw    (209.85,19.54) -- (177.45,39.94) ;
\draw    (214.15,28.8) -- (181.75,49.2) ;
\draw    (182.18,60.8) -- (214.58,81.2) ;

\draw (133,82.4) node [anchor=north west][inner sep=0.75pt]  [font=\large,color={rgb, 255:red, 205; green, 11; blue, 188 }  ,opacity=1 ]  {$\textcolor[rgb]{0.8,0.04,0.74}{n+m}$};

\end{tikzpicture} \\
        \hline
    \end{tabular}
    \caption{Contracting an edge in a packaged ribbon graph.}
    \label{Con}
\end{table}
See Figure~\ref{PolyDC} for an example of deletion and contraction in a packaged ribbon graph. In the figure, deletion branches left and contraction right. The operations are applied in the order $e, f, g$. 

The reader may note a similarity with the deletion and contraction operations of Noble and Welsh's U-polynomial~\cite{zbMATH01314959} for vertex weighted graphs however it is important to distinguish the different behavior of Noble and Welsh's contraction on loops and the contraction used here.

To more succinctly distinguish the edge types in the definitions of deletion and contraction, we introduce the following notation.
\begin{definition}
 For a packaged ribbon graph $\pG=(\bG,\V,\wV,\F,\wF)$ and an edge $e\in E$,
 \begin{enumerate}
\item   let $\n$ denote the number of blocks in $\F$ that contain a boundary component that intersects $e$; and
\item let $\m$ denote the number of blocks in $\V$ that contain a vertex incident to $e$. 
  \end{enumerate}
  When $\pG$ is not clear from context we shall shall write $\eta_{\pG}(e)$ and $\mu_{\pG}(e)$. 
 \end{definition}
 In this notation,  Cases~\ref{dcase1} and~\ref{dcase2} in the definition of deletion have $\n=1$, while Case~\ref{dcase3} has $\n=2$. 
Similarly, Cases~\ref{ccase1} and~\ref{ccase2}   in the definition of contraction have $\m=1$, while Case~\ref{ccase3} has $\m=2$.

\medskip

The \emph{dual} $\pG^*=(\bG^*,\V^*,\wVdual,\F^*,\wFdual)$ of a packaged ribbon graph $\pG=(\bG,\V,\wV,\F,\wF)$ is obtained in the following way. The ribbon graph $\bG^*$ is the dual of the ribbon graph $\bG$. As the vertices of $\bG$ are in one-to-one correspondence with the boundary components of $\bG^*$, the vertex partition $\V$ on $V(\bG)$ induces a boundary component partition $\F^*$ on the set of all boundary components in $\bG^*$. In a similar way, we can obtain a vertex partition $\V^*$ on $V(\bG^*)$ from the boundary component partition $\F$ on the set of all boundary components in $\bG$. The partition $\V^*$ (resp. $\F^*$) naturally inherits the weighting $\wVdual$ (resp. $\wFdual$) from the weighting $\wF$ (resp. $\wV$) on $\F$ (resp. $\V$). 
 At times we will abuse notation and simply denote the dual of $\pG=(\bG,\V,\wV,\F,\wF)$ by $\pG^*=(\bG^*,\F,\wF,\V,\wV)$.

The following proposition extends the classical and well-known identity  $G^*/e = (G\ba e)^*$ for a plane graph $G$ and its dual $G^*$.
\begin{proposition}\label{dualitydelcon}
    Let $\pG=(\bG,\V,\wV,\F,\wF)$ be a packaged ribbon graph and $e\in E$ be an edge. Then
    \begin{align*}
	    (\pG\backslash e)^* = \pG^*/e \hspace{24pt} \text{and} \hspace{24pt} (\pG/ e)^* = \pG^*\backslash e.
	\end{align*}
\end{proposition}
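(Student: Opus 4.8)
The plan is to verify the two identities separately by unwinding the definitions of duality, deletion, and contraction, and checking that the ribbon-graph part, the two partitions, and the two weightings agree on both sides. Since the second identity $(\pG/e)^* = \pG^*\backslash e$ follows from the first by replacing $\pG$ with $\pG^*$ and using $\pG^{**}=\pG$ (which is immediate from the definition of the dual, as $\bG^{**}=\bG$ and the induced partitions/weightings swap back), it suffices to prove $(\pG\backslash e)^* = \pG^*/e$. For the underlying ribbon graphs, $(\bG\backslash e)^* = \bG^*/e$ is the classical ribbon-graph identity (see \cite{graphsonsurfaces}), so the only real work is bookkeeping for the decorations.

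First I would observe the key structural fact driving everything: under duality, vertices of $\bG$ correspond to boundary components of $\bG^*$ and boundary components of $\bG$ correspond to vertices of $\bG^*$; moreover deletion of $e$ in $\bG$ acts on boundary components of $\bG$ exactly the way contraction of $e$ in $\bG^*$ acts on vertices of $\bG^*$ (both merge/split the regions incident to $e$), while leaving the vertices of $\bG$—equivalently, the boundary components of $\bG^*$—essentially untouched up to the natural correspondence. Concretely, I would match the three cases of the deletion definition with the three cases of the contraction definition: Case~\ref{dcase1} (one boundary component met twice) corresponds to Case~\ref{ccase1} ($e$ a loop), Case~\ref{dcase2} ($e$ meets two boundary components in the same block) corresponds to Case~\ref{ccase2} ($e$ a non-loop edge with both endpoints in the same block), and Case~\ref{dcase3} (two boundary components in different blocks) corresponds to Case~\ref{ccase3} (non-loop edge, endpoints in different blocks). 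In each matched pair, the formula for the new block (union, remove old labels, insert new ones) and the weighting formula ($+1$ in the first two cases, sum of the two weights in the third) are literally the same, so the boundary-component partition and weighting of $(\pG\backslash e)^*$—which by definition come from the vertex partition and weighting of $\pG\backslash e$, i.e.\ of $\pG$—match the vertex partition and weighting of $\pG^*/e$, and symmetrically the vertex data of $(\pG\backslash e)^*$ (coming from the boundary data of $\pG\backslash e$) match the boundary data of $\pG^*/e$ (coming, via the natural correspondence of boundary components of $\bG^*$ and $\bG^*/e$, from the boundary data of $\pG^*$, i.e.\ from the vertex data of $\pG$).

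The main obstacle, and the place where care is needed, is checking that the two ``natural correspondences'' invoked—one between boundary components of $\bG$ and of $\bG^*$ under duality, and one between boundary components of $\bG/e$ and of $\bG$ used in the definition of contraction—are compatible, so that the induced partitions genuinely agree rather than merely being abstractly isomorphic. I would handle this by fixing, once and for all, the identification of boundary components coming from the standard arrow-presentation / ribbon-structure description of $\bG^*$, and then tracking a boundary component of $\bG$ through both routes (delete $e$ then dualize, versus dualize then contract $e$) to confirm it lands on the same component, using that the edge $e$ is common to $\bG$, $\bG^*$, $\bG\backslash e$, and $\bG^*/e$. Once the correspondences are pinned down this becomes a finite, case-by-case check with no genuine difficulty; the whole proof is a verification, and I would present it compactly by doing one representative case in detail and remarking that the others are analogous.
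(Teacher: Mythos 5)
Your proposal is correct and follows essentially the same route as the paper's proof: reduce to one identity via the involution property of duality, invoke the classical ribbon-graph identity $(\bG\ba e)^*=\bG^*/e$, and then match the three cases of deletion against the three cases of contraction to see that the partitions and weightings transform identically (with the roles of vertex and boundary-component data swapped). Your additional attention to pinning down the compatibility of the two natural correspondences of boundary components is a point the paper passes over more briskly, but it does not change the argument.
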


\begin{proof}
    As duality is an involution, it suffices to only prove one of the theorem statements. We will show that $(\pG\backslash e)^* = \pG^*/e$. Denote the dual by $\pG^*=(\bG^*,\F,\wF,\V,\wV)$ and let $F$ be its set of boundary components. For the ribbon graph $\bG$, it is well established (e.g., see~\cite{graphsonsurfaces}) that $(\bG\ba e)^*=\bG^*/e$. We just need to check that the partitions and their weightings correctly align. 
    
    Observe that the three edge types considered in the definition of deletion within the primal correspond to the three edge types considered in the definition of contraction in the dual. That is to say, for example, that if an edge intersects one boundary component twice in the primal, then the corresponding edge in the dual will be a loop. Additionally, notice that the partitions and weightings are changed in the same manner for the same corresponding edge case of deletion and contraction. The only difference is that one affects the partition on the boundary components and the other affects the partition on the vertices. Hence, taking the dual after deleting an edge gives the same packaged ribbon graph as contracting the corresponding edge in the dual.
\end{proof}

\medskip

We will make use of the following graph that is naturally associated with a packaged ribbon graph. It arises from its underlying graph by identifying all vertices that are in the  same block in the partition. 
\begin{definition} 
    Let $\pG=(\bG,\V,\wV,\F,\wF)$ be a packaged ribbon graph. Its \emph{packaging} $G(\bG;\V)$ is the vertex-weighted graph obtained in the following way. Create a vertex for each block in the partition $\V$. 
    The vertex set of $G(\bG;\V)$ is  the partition $\V$.
    There is an edge $([u], [v])$ in $G(\bG;\V)$ for each edge $(u,v)$ in $\bG$, and each edge has this form. 
   The vertices of $G(\bG;\V)$ are weighted by $\wV$.
\end{definition}
 An example is shown in Figure \ref{Packaging}. 
We note that there is an asymmetry in our definitions as we do not define the packaging for the boundary component partition $\F$. We could do this, but for the sake of notational simplicity we work with  $G(\bG^*;\F)$ instead. 

Every edge of $G(\bG;\V)$ corresponds to a unique edge of $\bG$, and every vertex of $G(\bG;V)$ corresponds to a distinct block in $\V$. Thus, each subgraph $K$ of $G(\bG;\V)$ gives rise to a ribbon subgraph $\mathbb{K}=(U,A)$ where
$U:=\{u\in [w]\in \V : w\in V(K)\}$ 
(thus $U$ consists of all vertices in $\bG$ that are sent to vertices in $K$ when forming the packaging)
and $A$ is the set of edges in $\bG$ corresponding to the edges in $K$. We denote the ribbon subgraph $\mathbb{K}$ obtained in this way by $\bG[K]$.
Moreover, in this case we set 
\[f(\bG[K]):=f(\mathbb{K}).\]
Figure~\ref{Packaging} shows an example of $\bG[K]$ where $f(\bG[K])=4$.

\begin{figure}
\centering
    \centering
    \input{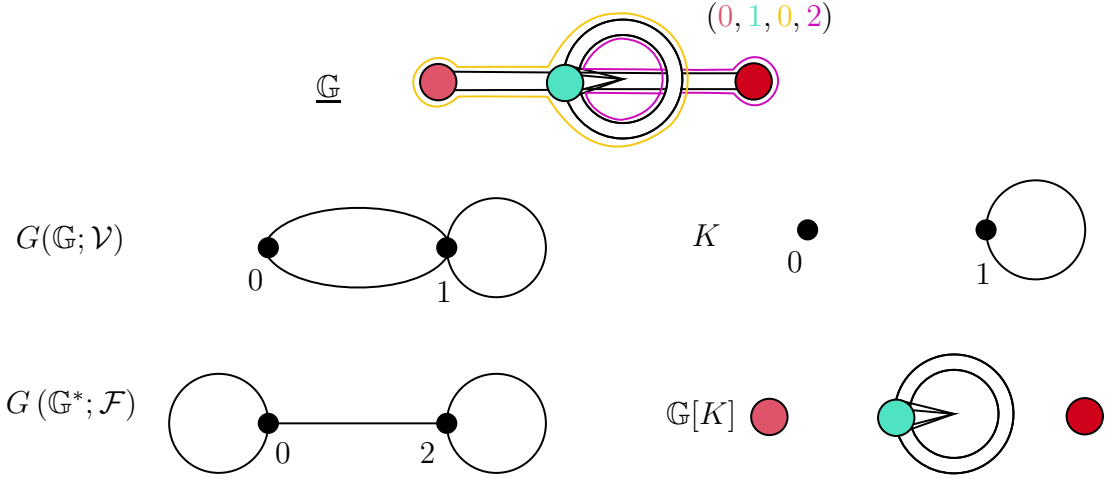}
    \caption{A packaged ribbon graph $\pG$, its packagings $G(\bG;\V)$ and $G(\bG^*;\F)$, and a subgraph $K$ of $G(\bG;\V)$ with its corresponding ribbon subgraph $\bG[K]$.}
    \label{Packaging}
\end{figure}

\subsection{A topological interpretation of packaged ribbon graphs}
Packaged ribbon graphs can be interpreted as graphs embedded in pseudo-surfaces that are equipped with vertex and face weights. Extending the work in~\cite{HM}, in this subsection we define these embedded graphs and describe how to move between the two types of object. 

We define a \emph{pseudo-surface} to be a topological space that results from a closed surface by contracting  finitely many closed paths to points. A \emph{pinch point} is any point in a pseudo-surface that does not have a neighbourhood homeomorphic to a disc. If a pseudo-surface contains no pinch points then it is also a surface.
(Although not as commonly studied as graphs in surfaces, graphs in pseudo-surfaces are frequently studied in topological graph theory. See, for example, \cite{zbMATH05064525,zbMATH06330594,zbMATH06473566, zbMATH01624430, HM} for a taste of the area.)

We say a graph $G=(V,E)$ is \emph{embedded in a pseudo-surface $\Sigma$} if $V$ is a set of points in the pseudo-surface, any pinch points are contained in $V$, and $E$ is a set of simple paths between the points such that two edges can only intersect at their incident vertices. Let $F$ denote the set of connected regions in $\Sigma\setminus G$. Define a \emph{vertex weighting} (resp. \emph{region weighting}) to be a mapping $\phi :V\to \mathbb{N}_0$ (resp. $\rho : F\to \mathbb{N}_0$).

Given a graph $G$ embedded in a pseudo-surface $\Sigma$ with vertex and region weights, $\phi$ and $\rho$, obtain its corresponding  packaged ribbon graph $\pG=(\bG,\V,\wV,\F,\wF)$ in the following way.
\begin{enumerate}
    \item Form a graph embedded in a surface by deleting a small neighbourhood of each pinch point, then contracting each boundary component to form a vertex.  Next form a ribbon graph  $\bG$ in the usual way by taking a neighbourhood of the resulting embedded graph.
     \item  For the partition  $\V$, place two vertices in the same block if and only if they arise from the same vertex of $G$ under this construction.  Denote the ribbon graph's set of boundary components by $F$. Define the partition $\F$ on $F$ such that two boundary components share a partition block if and only if they are in the same connected component of $\Sigma\setminus G$.   
    \item There is one-to-one correspondence between the vertices in $G$ and the blocks of $\V$, as well as between the connected regions of $\Sigma\setminus G$ and the blocks of $\F$. So the weightings $\phi$ and $\rho$ naturally induce weightings $\wV:\V\to \mathbb{N}_0$ and $\wF:\F\to\mathbb{N}_0$.
\end{enumerate}

On the other hand, given a packaged ribbon graph $\pG=(\bG,\V,\wV,\F,\wF)$ we can obtain an associated vertex and region weighted graph $G$ embedded in a pseudo-surface $\hat{\Sigma}$ in the following way.
\begin{enumerate}
    \item Cap off each boundary component of $\bG$ with a disc by identifying the boundary of the disc to the boundary component. Contract the vertex discs to points, and the edge discs to paths between these points. This results in the  graph cellularly embedded in a surface that corresponds to $\bG$ in the usual way.
    \item Next, for vertices that share a block in $\V$, identify them together into a pinch point. (If a block has size one, the vertex contained in it is unchanged). This gives us a graph $G$ embedded in a pseudo-surface $\Sigma$. For boundary components that share a block in $\F$, add a handle between the corresponding connected regions to connect them creating a new surface $\hat{\Sigma}$.
    \item Let $V$ denote the set of vertices in $G$ and $F$ denote the set of connected regions in $\hat{\Sigma}\setminus G$. There is a one-to-one correspondence between the vertices in $G$ and the blocks of $\V$, as well as between the connected regions of $\Sigma\setminus G$ and the blocks of $\F$. So the weightings $\wV$ and $\wF$ naturally induce the vertex weighting $\phi :V\to \mathbb{N}_0$ and region weighting $\rho : F\to \mathbb{N}_0$.
\end{enumerate}

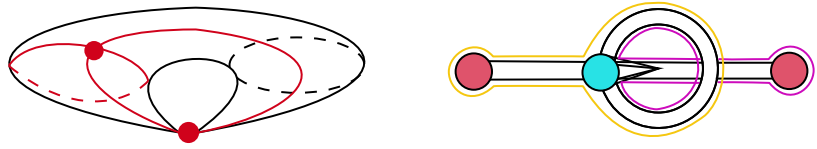
\begin{figure}
    \centering
    \tikzset{every picture/.style={line width=0.75pt}} 

\begin{tikzpicture}[x=0.75pt,y=0.75pt,yscale=-1,xscale=1]

\draw    (234.4,97.2) .. controls (306.4,46.2) and (165.4,49.2) .. (225.9,97.2) ;
\draw    (235.57,97.2) .. controls (368.4,73.2) and (324.4,37.2) .. (234.4,34.2) .. controls (134.4,37.2) and (89.4,74.2) .. (225.9,97.2) ;
\draw  [dash pattern={on 4.5pt off 4.5pt}] (251.4,63.2) .. controls (251.4,55.47) and (266.62,49.2) .. (285.4,49.2) .. controls (304.18,49.2) and (319.4,55.47) .. (319.4,63.2) .. controls (319.4,70.93) and (304.18,77.2) .. (285.4,77.2) .. controls (266.62,77.2) and (251.4,70.93) .. (251.4,63.2) -- cycle ;
\draw  [color={rgb, 255:red, 208; green, 2; blue, 27 }  ,draw opacity=1 ][fill={rgb, 255:red, 208; green, 2; blue, 27 }  ,fill opacity=1 ] (178.69,55.9) .. controls (178.69,53.49) and (180.65,51.53) .. (183.06,51.53) .. controls (185.47,51.53) and (187.42,53.49) .. (187.42,55.9) .. controls (187.42,58.31) and (185.47,60.26) .. (183.06,60.26) .. controls (180.65,60.26) and (178.69,58.31) .. (178.69,55.9) -- cycle ;
\draw [color={rgb, 255:red, 208; green, 2; blue, 27 }  ,draw opacity=1 ]   (140.07,63.53) .. controls (154.07,42.87) and (206.73,54.87) .. (210.4,71.2) ;
\draw [color={rgb, 255:red, 208; green, 2; blue, 27 }  ,draw opacity=1 ] [dash pattern={on 4.5pt off 4.5pt}]  (140.07,63.53) .. controls (170.73,88.87) and (198.73,83.53) .. (210.4,71.2) ;
\draw [color={rgb, 255:red, 208; green, 2; blue, 27 }  ,draw opacity=1 ]   (235.57,97.2) .. controls (292.4,86.2) and (317.4,55.2) .. (234.4,45.2) .. controls (199.62,45.2) and (183.37,51.14) .. (180.19,59.43) .. controls (175.88,70.68) and (195.67,86.26) .. (225.9,97.2) ;
\draw  [color={rgb, 255:red, 208; green, 2; blue, 27 }  ,draw opacity=1 ][fill={rgb, 255:red, 208; green, 2; blue, 27 }  ,fill opacity=1 ] (225.9,97.2) .. controls (225.9,94.53) and (228.06,92.37) .. (230.73,92.37) .. controls (233.4,92.37) and (235.57,94.53) .. (235.57,97.2) .. controls (235.57,99.87) and (233.4,102.03) .. (230.73,102.03) .. controls (228.06,102.03) and (225.9,99.87) .. (225.9,97.2) -- cycle ;
\draw    (382.06,70.52) -- (430.6,70.4) ;
\draw    (381.4,61.2) -- (431.8,61.6) ;
\draw    (490.4,62) -- (441,61.6) ;
\draw    (489.4,70) -- (440.6,70) ;
\draw  [draw opacity=0] (438.58,57.72) .. controls (441.89,44.26) and (454.25,34.48) .. (468.65,34.93) .. controls (485.21,35.45) and (498.22,49.3) .. (497.7,65.86) .. controls (497.18,82.42) and (483.33,95.42) .. (466.77,94.9) .. controls (454.06,94.5) and (443.44,86.25) .. (439.43,74.95) -- (467.71,64.92) -- cycle ; \draw   (438.58,57.72) .. controls (441.89,44.26) and (454.25,34.48) .. (468.65,34.93) .. controls (485.21,35.45) and (498.22,49.3) .. (497.7,65.86) .. controls (497.18,82.42) and (483.33,95.42) .. (466.77,94.9) .. controls (454.06,94.5) and (443.44,86.25) .. (439.43,74.95) ;  
\draw  [draw opacity=0] (445.34,62.94) .. controls (446.36,51.58) and (456.01,42.68) .. (467.75,42.7) .. controls (480.15,42.72) and (490.19,52.68) .. (490.17,64.96) .. controls (490.15,77.23) and (480.08,87.16) .. (467.67,87.14) .. controls (457.23,87.12) and (448.47,80.06) .. (445.97,70.5) -- (467.71,64.92) -- cycle ; \draw   (445.34,62.94) .. controls (446.36,51.58) and (456.01,42.68) .. (467.75,42.7) .. controls (480.15,42.72) and (490.19,52.68) .. (490.17,64.96) .. controls (490.15,77.23) and (480.08,87.16) .. (467.67,87.14) .. controls (457.23,87.12) and (448.47,80.06) .. (445.97,70.5) ;  
\draw  [color={rgb, 255:red, 0; green, 0; blue, 0 }  ,draw opacity=1 ][fill={rgb, 255:red, 223; green, 83; blue, 107 }  ,fill opacity=1 ][line width=0.75]  (365.43,66.46) .. controls (365.43,61.39) and (369.54,57.28) .. (374.6,57.28) .. controls (379.67,57.28) and (383.78,61.39) .. (383.78,66.46) .. controls (383.78,71.52) and (379.67,75.63) .. (374.6,75.63) .. controls (369.54,75.63) and (365.43,71.52) .. (365.43,66.46) -- cycle ;
\draw  [color={rgb, 255:red, 0; green, 0; blue, 0 }  ,draw opacity=1 ][fill={rgb, 255:red, 40; green, 226; blue, 229 }  ,fill opacity=1 ][line width=0.75]  (429.41,66.9) .. controls (429.41,61.83) and (433.51,57.72) .. (438.58,57.72) .. controls (443.65,57.72) and (447.76,61.83) .. (447.76,66.9) .. controls (447.76,71.97) and (443.65,76.07) .. (438.58,76.07) .. controls (433.51,76.07) and (429.41,71.97) .. (429.41,66.9) -- cycle ;
\draw    (525.4,62) -- (497.8,62) ;
\draw    (525.8,70) -- (496.6,70) ;
\draw  [color={rgb, 255:red, 0; green, 0; blue, 0 }  ,draw opacity=1 ][fill={rgb, 255:red, 223; green, 83; blue, 107 }  ,fill opacity=1 ][line width=0.75]  (524.51,66.12) .. controls (524.51,61.05) and (528.62,56.94) .. (533.69,56.94) .. controls (538.75,56.94) and (542.86,61.05) .. (542.86,66.12) .. controls (542.86,71.18) and (538.75,75.29) .. (533.69,75.29) .. controls (528.62,75.29) and (524.51,71.18) .. (524.51,66.12) -- cycle ;
\draw [color={rgb, 255:red, 205; green, 11; blue, 188 }  ,draw opacity=1 ][line width=0.75] [line join = round][line cap = round]   (488.67,60.17) .. controls (486.27,60.17) and (460.67,59.83) .. (447.67,59.83) .. controls (452,49.5) and (461.33,44.33) .. (467,44.5) .. controls (493.4,45.7) and (496,81.5) .. (467,85.5) .. controls (461.33,85.5) and (450.67,80.83) .. (449,71.83) .. controls (460,71.83) and (476.33,72.17) .. (488,72.17) ;
\draw [color={rgb, 255:red, 205; green, 11; blue, 188 }  ,draw opacity=1 ][line width=0.75] [line join = round][line cap = round]   (498.33,60.17) .. controls (508,60.5) and (511.33,60.17) .. (523.33,60.17) .. controls (533.67,47.17) and (546,56.83) .. (546,65.83) .. controls (546,74.83) and (534,84.83) .. (523.67,72.17) .. controls (510.33,71.83) and (510.67,71.83) .. (498,71.83) ;
\draw  [color={rgb, 255:red, 245; green, 199; blue, 16 }  ,draw opacity=1 ][line width=0.75] [line join = round][line cap = round] (470.67,31.83) .. controls (463.5,31.83) and (445.33,28.83) .. (430,58.83) .. controls (426.46,58.71) and (400.83,58.67) .. (384.33,58.83) .. controls (375.33,48.5) and (362,57.5) .. (362,66.5) .. controls (362,75.5) and (372.67,84.83) .. (384.67,73.83) .. controls (385.67,73.5) and (428.67,73.83) .. (429.67,73.83) .. controls (446,100.17) and (467,107.17) .. (491.33,88.5) .. controls (505.67,74.83) and (506,37.5) .. (470.67,31.83) -- cycle ;

\end{tikzpicture}
    \caption{A graph embedded in a pseudo-surface and its corresponding packaged ribbon graph (weightings are omitted for clarity).}
    \label{EmbPack}
\end{figure}

Observe that whilst there is a unique packaged ribbon graph arising from a  given vertex and region weighted graph embedded in a pseudo-surface, the converse is not true. This can be seen through how we attach the handles.  
In order to establish a one-to-one correspondence between the class of graphs embedded in pseudo-surfaces and the class of packaged ribbon graphs, we need to use the following equivalence relation from  Definition~14 of \cite{HM}. We say two graphs embedded in a pseudo-surface are related by \emph{stabilisation} if one can be obtained from the other by a finite sequence of removal and addition of handles which does not disconnect any region or coalesce any two regions, and any discs or annuli involved in adding or removing handles are disjoint from the graph. Using Theorem~15 of \cite{HM}, two graphs embedded in pseudo-surfaces with vertex and region weightings correspond to the same packaged ribbon graph if and only if they are related by stabilisation.

\section{A Tutte polynomial for packaged ribbon graphs}\label{shjk}

In this section we introduce a Tutte polynomial for packaged ribbon graphs, $\T(\pG;\boldsymbol{x},\boldsymbol{y})$. We initially define our polynomial as a state sum (Definition~\ref{fhasj}). However, in Theorem~\ref{dcthm} we show that, just as with the classical Tutte polynomial,  it can equivalently be defined through deletion-contraction relations. Moreover,  Theorem~\ref{thun} shows that it is a universal deletion-contraction invariant. In the next section we shall show that $\T(\pG;\boldsymbol{x},\boldsymbol{y})$ subsumes both the surface Tutte polynomial of~\cite{maps1}, and the canonical topological Tutte polynomials of~\cite{HM,KMT}. As a consequence we obtain a deletion-contraction relation for the surface Tutte polynomial.

\begin{definition}\label{fhasj}
    Let $\pG=(\bG,\V,\wV,\F,\wF)$ be a packaged ribbon graph. Define the \emph{packaged surface Tutte polynomial} $\T(\pG;\boldsymbol{x},\boldsymbol{y})$, with variables $\boldsymbol{x}=(x,x_0,x_{1/2},x_1, \ldots)$, $\boldsymbol{y}=(y,y_0,y_{1/2},y_1,\ldots)$, as follows
    \[\T(\pG;\boldsymbol{x},\boldsymbol{y}):= \sum_{A\subseteq E} x^{n(G(\bG^*|A^c;\F))}y^{n(G(\bG|A;\V))}\prod_{\substack{H \text{ cpt. of } \\  G(\bG^*|A^c;\F)}}x_{g(\bG^*,H)}\prod_{\substack{K \text{ cpt. of } \\  G(\bG|A;\V)}}y_{g(\bG,K)}\]
    where cpt. refers to connected component,
    \[g(\bG,K):=\tfrac{1}{2}(2k(K)+e(K)-v(K)+\wV(K)-f(\bG[K])),\]
    \[g(\bG^*,H):=\tfrac{1}{2}(2k(H)+e(H)-v(H)+\wV(H)-f(\bG^*[H])),\]
    here  $\wV(K)=\sum_{v\in V(K)} \wV([v])$, and $\wV(H)$ is defined similarly.
\end{definition}
Note that although the packaged surface Tutte polynomial is defined in an unbounded number of variables, for any given  $\pG$ it is a polynomial in a finite number of variables. We note that  a consequence of Theorem~\ref{asg} is that when $\T(\pG;\boldsymbol{x},\boldsymbol{y})$ is restricted to (non-packaged) ribbon graphs only integrally indexed variables arise. 

\begin{example} \label{ExStateSum}
    Consider the initial packaged ribbon graph $\pG$ depicted in Figure \ref{PolyDC} (i.e. a 2-cycle interlaced with a loop). Every  block in the partitions has size one and every block is mapped to zero. Applying the definition of the packaged surface Tutte polynomial to this packaged ribbon graph yields the following.
    \[\T(\pG;\boldsymbol{x},\boldsymbol{y})= x^3x_1y_0^2 +2x^2x_1y_0+3xyx_0y_0+x^2yx_0y_0^2+y^2x_0y_1 .\]
\end{example}

\bigskip

We defined $\T(\pG;\boldsymbol{x},\boldsymbol{y})$ as a state-sum, however the following theorem shows that, just as with the Tutte polynomial, it can be defined through deletion-contraction relations. This is the key property for the applications of the polynomial in the next sections.
\begin{theorem}\label{dcthm}
    Let $\pG=(\bG,\V,\wV,\F,\wF)$ be a packaged ribbon graph and $e\in E$. Then
	\[\T(\pG;\boldsymbol{x},\boldsymbol{y}) = x^{2-\n}\T(\pG\backslash e;\boldsymbol{x},\boldsymbol{y})+y^{2-\m}\T(\pG/e;\boldsymbol{x},\boldsymbol{y}).\]
    If $\bG$ has no edges, then 
       \[\T(\pG;\boldsymbol{x},\boldsymbol{y})=\prod_{[f]\in \F} x_{(1- \vert [f]\vert +\omega_{\F}([f]))/2} \prod_{[v]\in\V} y_{(1-\vert [v]\vert +\omega_{\V}([v]))/2}\]
    where, for any block $[a]$ of a partition, $\vert [a] \vert$ denotes how many elements are in the block.
\end{theorem}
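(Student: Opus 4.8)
The plan is to prove the two assertions separately. The edgeless evaluation comes from computing the single surviving term of the state sum: when $\bG$ has no edges, $A=\emptyset=A^c$ is the only subset of $E$; an edgeless ribbon graph is a disjoint union of discs, so $\bG^*$ is again edgeless with $V(\bG^*)$ in bijection with $V(\bG)$, and hence $G(\bG|A;\V)$ and $G(\bG^*|A^c;\F)$ are the edgeless graphs on the vertex sets $\V$ and $\F$, both of nullity $0$. The components of $G(\bG|A;\V)$ are the singletons $\{[v]\}$, and for such a $K$ one has $k(K)=v(K)=1$, $e(K)=0$, $\wV(K)=\wV([v])$, while $\bG[K]$ is the union of the $|[v]|$ discs of $\bG$ lying in $[v]$, so $f(\bG[K])=|[v]|$ and $g(\bG,K)=\tfrac12(1-|[v]|+\wV([v]))$; symmetrically $g(\bG^*,H)=\tfrac12(1-|[f]|+\wF([f]))$ for $H=\{[f]\}$. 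Substituting these into the single term gives exactly the stated product.

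For the deletion--contraction relation, write $\T(\pG)=\Sigma_0+\Sigma_1$ where $\Sigma_0$ (resp.\ $\Sigma_1$) collects the terms with $e\notin A$ (resp.\ $e\in A$). I will prove $\Sigma_0=x^{2-\n}\,\T(\pG\backslash e)$; the companion identity $\Sigma_1=y^{2-\m}\,\T(\pG/e)$ then follows by duality. Indeed, substituting $\pG^*$ for $\pG$ in Definition~\ref{fhasj} and using $\bG^{**}=\bG$, the swaps $\V\leftrightarrow\F$, $\wV\leftrightarrow\wF$, and the reindexing $A\leftrightarrow A^c$ yields $\T(\pG^*;\boldsymbol{x},\boldsymbol{y})=\T(\pG;\boldsymbol{y},\boldsymbol{x})$; the same reindexing also swaps the conditions $e\in A$ and $e\notin A$, so it carries $\Sigma_0(\pG^*)$ to $\Sigma_1(\pG)$. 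Applying $\Sigma_0=x^{2-\n}\,\T(\pG\backslash e)$ to $\pG^*$, then Proposition~\ref{dualitydelcon} ($\pG^*\backslash e=(\pG/e)^*$) and the identity $\ngd=\m$, and finally interchanging the variables, converts it into $\Sigma_1(\pG)=y^{2-\m}\,\T(\pG/e)$. So it suffices to establish the first identity.

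To prove $\Sigma_0=x^{2-\n}\,\T(\pG\backslash e)$ I compare the two sums term by term over $A\subseteq E\setminus e$. Deletion of $e$ leaves $\bG|A$, $\V$ and $\wV$ untouched, and for every subgraph $K$ of $G(\bG|A;\V)$ the ribbon subgraph $\bG[K]$ avoids $e$, so $(\bG\backslash e)[K]=\bG[K]$; hence the entire $y$-part of the $A$-term is unchanged. For the $x$-part, Proposition~\ref{dualitydelcon} together with $(\bG\backslash e)^*=\bG^*/e$ shows that the packaged ribbon graph controlling it in $\T(\pG\backslash e)$ is $\pG^*/e$: its ribbon graph is $\bG^*/e$, its relevant partition $\F'$ is the $e$-contraction of $\F$ viewed as a vertex partition of $\bG^*$, and $(\bG^*/e)|(A^c\setminus e)=(\bG^*|A^c)/e$. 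So everything reduces to tracking how the packaging $G(\bG^*|A^c;\F)$ and the numbers $g(\bG^*,\cdot)$ change when $e$ is contracted in $\mathbb{M}:=\bG^*|A^c$ with vertex partition $\F$; the relevant case of contraction is governed by $\mgd$, and $\mgd=\n$. If $\n=2$ (case~\ref{ccase3}) the two ends of $e$ lie in distinct blocks of $\F$, so $e$ is a non-loop of $G(\mathbb{M};\F)$ and the induced operation on the packaging is ordinary graph contraction of that non-loop: $k$ is unchanged, $v$ and $e$ each drop by one, and the total block-weight is preserved by additivity. If $\n=1$ then $e$ is a loop of $G(\mathbb{M};\F)$ --- either a genuine loop of $\bG^*$ (case~\ref{ccase1}) or a non-loop of $\bG^*$ with both ends in one block (case~\ref{ccase2}); here the subtlety is that when $e$ is a genuine loop its contraction can disconnect $\bG^*$, yet the two resulting vertices lie in the single block $[u']$ of $\F'$, so the induced operation on $G(\mathbb{M};\F)$ is merely deletion of the loop $e$: $k$ and $v$ are unchanged, $e$ drops by one, and the weight of the affected block increases by $1$. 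In both cases there is a natural bijection of components, the identity off the component $H_0$ containing $e$; using $(\bG^*/e)[H_0']=(\bG^*[H_0])/e$ and the standard fact $f(\bH/e)=f(\bH)$ (which follows from $(\bH/e)^*=\bH^*\backslash e$, whence $f(\bH/e)=v(\bH^*\backslash e)=v(\bH^*)=f(\bH)$) one checks that the change in $n(G(\bG^*|A^c;\F))$ is exactly $2-\n$ and that $g(\bG^*/e,H_0')=g(\bG^*,H_0)$ --- the $-1$ in $e$ cancelling the $+1$ in the weight when $\n=1$ and the $-1$ in $v$ when $\n=2$ --- while every other component is literally unchanged. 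This produces the factor $x^{2-\n}$ term by term, so $\Sigma_0=x^{2-\n}\,\T(\pG\backslash e)$, and the duality reduction gives $\Sigma_1=y^{2-\m}\,\T(\pG/e)$.

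The step I expect to be the main obstacle is precisely this last reconciliation: ribbon-graph contraction of a loop can split a connected component, but after collapsing each partition block to a vertex it induces only a loop deletion on the packaging, so the component structure of $G(\bG^*|A^c;\F)$ is in fact unchanged; one then has to keep the weight bookkeeping straight so that the $\n=1$ and $\n=2$ cases both collapse to the single equality $g(\bG^*/e,H_0')=g(\bG^*,H_0)$ and to the single exponent $2-\n$.
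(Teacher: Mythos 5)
Your proposal is correct and follows essentially the same route as the paper: split the state sum according to whether $e\in A$, reduce one half to the other via the duality $\T(\pG;\boldsymbol{x},\boldsymbol{y})=\T(\pG^*;\boldsymbol{y},\boldsymbol{x})$ together with Proposition~\ref{dualitydelcon} and the identification $\ngd=\mg$, and carry out the core case analysis of how the packaging's nullity, component structure, and the quantity $g$ respond to contracting $e$ (including the subtlety that a loop contraction may split the ribbon graph but only deletes a loop from the packaging). The only cosmetic difference is which half you compute directly — you track the $x$-part under deletion (i.e.\ contraction in $\bG^*$) and dualise to get the contraction term, whereas the paper tracks the $y$-part $\Phi$ under contraction and dualises to get the deletion term; these are mirror images of the same computation.
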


\begin{proof}
    Before we begin our analysis into how the packaged surface Tutte polynomial changes under deletion and contraction, we will simplify it by expressing it in terms of two polynomials that correspond under duality. Define the following polynomials for any packaged ribbon graph $\pG=(\bG,\V,\omega_\V,\F,\omega_\F)$ with edge subset $A$
    \begin{align*}
	   \Theta(\pG,A;\boldsymbol{x}) & :=x^{n(G(\bG^*|A^c;\F))}     \prod_{\substack{H \text{ cpt. of } \\  G(\bG^*|A^c;\F)}}x_{g(\bG^*,H)}, \\
	   \Phi(\pG,A;\boldsymbol{y}) & := y^{n(G(\bG|A;\V))}\prod_{\substack{K \text{ cpt. of } \\  G(\bG|A;\V)}}y_{g(\bG,K)}.
    \end{align*}

    Consider $\Theta(\pG^*,A^c;\boldsymbol{x})$ where $\pG^*=(\bG^*,\F,\omega_\F,\V,\omega_\V)$. As duality is an involution and $(A^c)^c=E\setminus A^c=A$, we have 
     \begin{align}\label{PhiThetaDual}
         \Theta(\pG^*,A^c;\boldsymbol{x})=x^{n(G(\bG|A;\V))}     \prod_{\substack{H \text{ cpt. of } \\  G(\bG|A;\V)}}x_{g(\bG,H)}=\Phi(\pG,A;\boldsymbol{x}).
     \end{align}
    Using the above we can rewrite $\T(\pG;\boldsymbol{x},\boldsymbol{y})$ in terms of $\Phi$ as follows.
	\begin{equation}\begin{split}\label{TPhiTheta}
	\T(\pG;\boldsymbol{x},\boldsymbol{y}) &=\sum_{A\subseteq E}\Theta(\pG,A;\boldsymbol{x})\Phi(\pG,A;\boldsymbol{y}) \\
	&=\sum_{A\subseteq E\setminus e} \lbrack \Theta(\pG,A;\boldsymbol{x})\Phi(\pG,A;\boldsymbol{y}) + \Theta(\pG,A\cup e;\boldsymbol{x})\Phi(\pG,A\cup e;\boldsymbol{y}) \rbrack \\
        &=\sum_{A\subseteq E\setminus e} \Phi(\pG^*,A^c;\boldsymbol{x})\Phi(\pG,A;\boldsymbol{y}) + \sum_{A\subseteq E\setminus e} \Phi(\pG^*,(A\cup e)^c;\boldsymbol{x})\Phi(\pG,A\cup e;\boldsymbol{y}) .
	\end{split}\end{equation}

     We need to check how the packaged surface Tutte polynomial changes when we delete or contract the four different types of edges (i.e. whether $\n=1$ or 2 and whether $\m=1$ or~2). With this new expression for the polynomial that uses duality, it suffices to look at how $\Phi(\pG,A;\boldsymbol{y})$ changes under deletion and contraction. As $\Phi(\pG,A;\boldsymbol{y})$ is independent from the boundary component partition $\F$, our edge cases are only dependent on the vertex partition $\V$. So we only consider the two edge cases when $\m=1$ and $\m=2$. For the remainder of the proof for the first half of the theorem, we assume that $A\subseteq E\setminus e$.

     We consider how $\Phi(\pG,A;\boldsymbol{y})$ changes under deletion. As $A\subseteq E\setminus e$, then $\pG\ba A^c=(\pG\ba e)\ba A^c$ so $\bG|A=(\bG\ba e)|A$ and $\V, \wV$ are unchanged. So in this case $\Phi(\pG,A;\boldsymbol{y})=\Phi(\pG\ba e,A;\boldsymbol{y})$.

     We want to express $\Phi(\pG,A\cup e;\boldsymbol{y})$ as a polynomial of $\pG/e=(\bG/e,\V',\omega_\V',\F',\omega_\F')$. Here we consider the two edge cases of $\m=2$ and $\m=1$ separately. Suppose $\m=2$, or equivalently $e$ is incident to two vertices $u,v$ where $[u]\neq [v]$. The packaging $G(\bG/e|A;\V')$ has the same number of connected components, one less edge and one less vertex than the packaging $G(\bG|(A\cup e);\V)$. So $n(G(\bG/e|A;\V'))=n(G(\bG|(A\cup e);\V))$. As the number of connected components is unchanged, there is a one-to-one correspondence between the connected components of $G(\bG/e|A;\V')$ and those of $G(\bG|(A\cup e);\V)$. Let $K$ be a connected component of $G(\bG|(A\cup e);\V)$ and $K'$ denote its corresponding connected component of $G(\bG/e|A;\V')$. First assume $e\in E(K)$. Observe that contracting $e$ in $\pG$ before forming the packaging does not change the total weight $\wV(K)$, it decreases the number of edges and the number of vertices in $K$ by one, and does not change the number of boundary components. Therefore $g(\bG,K)=g(\bG/e,K')$. Next if $e\notin E(K)$, then contracting $e$ does not alter $K$, and $g(\bG,K)=g(\bG/e,K')$ is unaffected by contracting $e$. So $\Phi(\pG,A\cup e;\boldsymbol{y})=\Phi(\pG/e,A;\boldsymbol{y})$ when $\m=2$. 

     Now suppose $\m=1$, so either $e$ is a non-loop edge whose incident vertices are in the same block of $\V$ or $e$ is a loop. We first assume the former is true, then the packaging $G(\bG/e|A;\V')$ has the same number of connected components, one less edge and the same number of vertices as the packaging $G(\bG|(A\cup e);\V)$. So $n(G(\bG/e|A;\V'))=n(G(\bG|(A\cup e);\V))-1$. As before, there is a one-to-one correspondence between the connected components of $G(\bG/e|A;\V')$ and those of $G(\bG|(A\cup e);\V)$. Let $K$ be a connected component of $G(\bG|(A\cup e);\V)$ and $K'$ denote its corresponding connected component of $G(\bG/e|A;\V')$. First assume $e\in E(K)$. Observe that contracting $e$ in $\pG$ before forming the packaging  increases the total weight $\wV(K)$ by one, it decreases the number of edges by one, and does not change the number of boundary components or vertices. Therefore, $g(\bG,K)=g(\bG/e,K')$. Next if $e\notin E(K)$, then contracting $e$ does not alter $K$, and $g(\bG,K)=g(\bG/e,K')$. So $\Phi(\pG,A\cup e;\boldsymbol{y})=y\Phi(\pG/e,A;\boldsymbol{y})$.
     
     Finally, assume $e$ is a loop. Then the packaging $G(\bG/e|A;\V')$ has the same number of connected components, one less edge and the same number of vertices as the packaging $G(\bG|(A\cup e);\V)$. So $n(G(\bG/e|A;\V'))=n(G(\bG|(A\cup e);\V))-1$. As before, there is a one-to-one correspondence between the connected components of $G(\bG/e|A;\V)$ and those of $G(\bG|(A\cup e);\V)$. Let $K$ be a connected component of $G(\bG|(A\cup e);\V)$. First assume $e\in E(K)$. Observe that contracting $e$ in $\pG$ before forming the packaging increases the total weight $\wV(K)$ by one, decreases the number of edges by one, and does not change the number of boundary components. Therefore, $g(\bG,K)=g(\bG/e,K')$. Next if $e\notin E(K)$, then $g(\bG,K)$ is unaffected by contracting $e$. So $\Phi(\pG,A\cup e;\boldsymbol{y})=y\Phi(\pG/e,A;\boldsymbol{y})$ when $\m=1$.

     All that remains to prove the first half of the theorem is expressing $\Phi(\pG^*,A^c;\boldsymbol{x})$ and $\Phi(\pG^*,(A\cup e)^c;\boldsymbol{x})$ in terms of $\pG\ba e$ and $\pG/e$. Recall that $A\subseteq E\setminus e$, so $e\in A^c$ and $e\notin (A\cup e)^c$. Therefore by the previous case analysis 
     \begin{equation*}
        \Phi(\pG^*,A^c;\boldsymbol{x})=
        \begin{cases}
             \Phi(\pG^*/e,A^c\setminus e;\boldsymbol{x}) & \text{ if $\mgd=2$},\\
             x\Phi(\pG^*/e,A^c\setminus e;\boldsymbol{x}) & \text{ if $\mgd=1$},
         \end{cases}
     \end{equation*}
    and $\Phi(\pG^*,(A\cup e)^c;\boldsymbol{x})=\Phi(\pG^*\ba e,(A\cup e)^c;\boldsymbol{x})$. By Proposition \ref{dualitydelcon}, $\pG^*/e=\pG\ba e$ and $\pG^*\ba e=\pG/ e$. Also observe that $\mgd=2$ for an edge $e$ in the dual $\pG^*$ is equivalent to the corresponding edge $e$ (here we do abuse notation by not relabelling the dual edges) in $\pG$ having $\np=2$. The same argument also holds for $\mgd=1$ corresponding to $\np=1$ under duality. Hence, 
    \begin{equation*}
        \Phi(\pG^*,A^c;\boldsymbol{x})=
        \begin{cases}
             \Phi((\pG\ba e)^*,A^c\setminus e;\boldsymbol{x}) & \text{ if $\np=2$},\\
             x\Phi((\pG\ba e)^*,A^c\setminus e;\boldsymbol{x}) & \text{ if $\np=1$},
         \end{cases}
     \end{equation*}
    and $\Phi(\pG^*,(A\cup e)^c;\boldsymbol{x})=\Phi((\pG/ e)^*,(A\cup e)^c;\boldsymbol{x})$. Substituting all the relevant deletion and contraction relations for $\Phi$ into (\ref{TPhiTheta}) gives the first half of the theorem.

    Finally for the edgeless case, suppose $\pG$ is an edgeless packaged ribbon graph then the only edge subset to consider is $A=\emptyset$. In which case, $n(G(\bG^*|A^c;\F))=n(G(\bG|A;\V))=0$ and each connected component of $G(\bG|A;\V)$ (resp. $G(\bG^*|A^c;\F)$) is just a block of $\V$ (resp. $\F$). As there are no edges, the number of elements in each block (of either partition) corresponds to how many vertices, and hence how many boundary components, are in the ribbon graph induced by that block. Substituting this into $\T$ gives the second half of the theorem. 
\end{proof}

\begin{figure}[!t]
    \centering
    \input{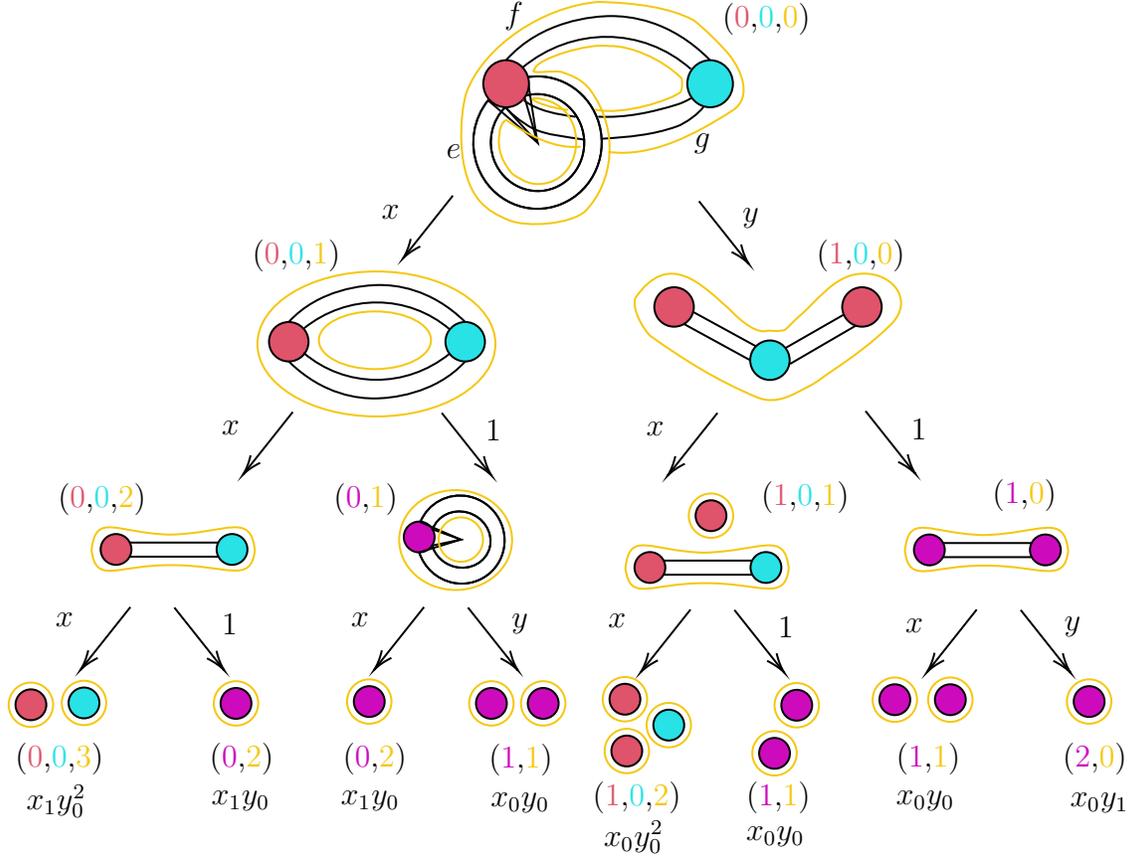}
    \caption{A computation of the packaged surface Tutte polynomial using deletion and contraction.}
    \label{PolyDC}
\end{figure}

\begin{example}\label{sag}
We give the alternative deletion-contraction method of computing the packaged surface Tutte polynomial of Example \ref{ExStateSum} using Theorem \ref{dcthm} in Figure \ref{PolyDC}. Within the diagram, the labels on the arrows are the coefficients for the packaged surface Tutte polynomial applied to the packaged ribbon graph at the head of the arrow. These coefficients arise from the deletion and contraction relation in Theorem \ref{dcthm}. The terms beneath each edgeless packaged ribbon graph are their evaluations under the packaged surface Tutte polynomial. The full evaluation of the polynomial applied to the initial packaged ribbon graph is equal to the sum of the polynomial applied to each edgeless packaged ribbon graph where each term is multiplied by its associated coefficient. Thus 
\[\T(\pG;\boldsymbol{x},\boldsymbol{y})= x^3x_1y_0^2+x^2x_1y_0+x^2x_1y_0+xyx_0y_0+x^2yx_0y_0^2+xyx_0y_0+xyx_0y_0+y^2x_0y_1.\]
\end{example}

In Equation~\eqref{PhiThetaDual} we observed that $\Phi(\pG,A;\boldsymbol{y}) = \Theta(\pG^*,A^c;\boldsymbol{y})$. So by substitution we get the following duality relation that extends the classical duality relation for the Tutte polynomial that $T(G;x,y)=T(G^*;y,x)$ for a plane graph $G$.
\begin{theorem}\label{thup}
    Let $\pG=(\bG,\V,\wV,\F,\wF)$ be a packaged ribbon graph. Then
    \[\T(\pG;\boldsymbol{x},\boldsymbol{y})=\T(\pG^*; \boldsymbol{y},\boldsymbol{x}).\]
\end{theorem}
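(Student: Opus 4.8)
The plan is to deduce the result directly from the state-sum formula together with the duality identity \eqref{PhiThetaDual}, exactly as the text preceding the theorem suggests. Recall that $\T(\pG;\boldsymbol{x},\boldsymbol{y}) = \sum_{A\subseteq E} \Theta(\pG,A;\boldsymbol{x})\,\Phi(\pG,A;\boldsymbol{y})$, where $\Theta$ and $\Phi$ are the two ``dual'' halves of the state sum: $\Theta$ only sees the boundary-component partition $\F$ (through $\bG^*$), while $\Phi$ only sees the vertex partition $\V$. The identity \eqref{PhiThetaDual} records that passing to the dual packaged ribbon graph and complementing the edge set interchanges these two halves, namely $\Theta(\pG^*,A^c;\boldsymbol{z}) = \Phi(\pG,A;\boldsymbol{z})$ for any vector of variables $\boldsymbol{z}$; this is a formal polynomial identity, so it holds whichever variables are substituted.

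First I would apply \eqref{PhiThetaDual} a second time, with $\pG^*$ in place of $\pG$ and $A^c$ in place of $A$. Using that duality is an involution (so $(\pG^*)^* = \pG$, by Proposition~\ref{dualitydelcon}'s ambient setup) and that complementation is an involution ($(A^c)^c = A$), this gives $\Phi(\pG^*,A^c;\boldsymbol{z}) = \Theta(\pG,A;\boldsymbol{z})$. Thus both halves of the dual state sum can be rewritten in terms of $\pG$ itself.

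Next I would expand $\T(\pG^*;\boldsymbol{y},\boldsymbol{x})$ from Definition~\ref{fhasj} and re-index the sum, replacing the summation variable $B\subseteq E$ by $B = A^c$; since $A\mapsto A^c$ is a bijection of the power set of $E$ onto itself, the value of the sum is unchanged. Substituting the two identities above then yields
\[\T(\pG^*;\boldsymbol{y},\boldsymbol{x}) = \sum_{A\subseteq E}\Theta(\pG^*,A^c;\boldsymbol{y})\,\Phi(\pG^*,A^c;\boldsymbol{x}) = \sum_{A\subseteq E}\Phi(\pG,A;\boldsymbol{y})\,\Theta(\pG,A;\boldsymbol{x}) = \T(\pG;\boldsymbol{x},\boldsymbol{y}),\]
which is the claimed identity.

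There is essentially no obstacle here beyond careful bookkeeping: the only points that genuinely need checking are that \eqref{PhiThetaDual} is variable-agnostic (so that feeding it $\boldsymbol{y}$ rather than $\boldsymbol{x}$ is legitimate), that the two involutions $\pG\mapsto\pG^*$ and $A\mapsto A^c$ interact correctly when \eqref{PhiThetaDual} is re-applied to $\pG^*$, and that re-indexing the state sum over complements is valid. All of this is routine once \eqref{PhiThetaDual} and Proposition~\ref{dualitydelcon} are available; in particular, no new geometric input about ribbon graphs, contraction, or genus is required for this theorem.
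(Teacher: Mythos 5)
Your proposal is correct and follows exactly the route the paper takes: the paper's proof is just the observation that $\Phi(\pG,A;\boldsymbol{y})=\Theta(\pG^*,A^c;\boldsymbol{y})$ from \eqref{PhiThetaDual} "so by substitution" the result follows, and your re-indexing over $A\mapsto A^c$ together with the two applications of \eqref{PhiThetaDual} is precisely the bookkeeping that substitution entails. No discrepancies.
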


The following is a universality theorem for the packaged surface Tutte polynomial that is analogous to the universality theorem for the Tutte polynomial as stated in \cite{zbMATH01179517}.

\begin{theorem}\label{thun}
    There is a unique map $U$ from packaged ribbon graphs to $\mathbb{Z}[\alpha,\beta,\kappa,\tau,\boldsymbol{a},\boldsymbol{b}]$, where $\boldsymbol{a}=(a_0,a_1,\ldots)$ and $\boldsymbol{b}=(b_0,b_1,\ldots)$, such that
    \begin{align}\label{ft1}
        U(\pG)=
        \begin{cases}
            \alpha U(\pG\ba e) + \beta U(\pG/e) &\text{ if $\n=1$ and $\m=1$,}\\
            \alpha U(\pG\ba e) + \kappa U(\pG/e) &\text{ if $\n=1$ and $\m=2$,}\\
            \tau U(\pG\ba e) + \beta U(\pG/e) &\text{ if $\n=2$ and $\m=1$,}\\
            \tau U(\pG\ba e) + \kappa U(\pG/e) &\text{ if $\n=2$ and $\m=2$,}
        \end{cases}
    \end{align}
    for every edge $e\in E$, and if $\pG$ is edgeless we have 
    \begin{equation}\label{ft2}
    U(\pG)=\prod_{[f]\in\F} \tau a_{\wF([f])-|[f]|} \prod_{[v]\in\V}\kappa b_{\wV([v])-|[v]|}.\end{equation}
    Furthermore, 
     \begin{equation}\label{ft3} U(\pG)=\tau^{|\F|}\kappa^{|\V|}\T(\pG;\boldsymbol{x},\boldsymbol{y}),\end{equation}
    where $x=\alpha$, $y=\beta$, $x_i= a_{2(1-i)}$, $y_i= b_{2(1-i)}$ for $i=0,1,\ldots$.
\end{theorem}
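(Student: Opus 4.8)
The plan is to prove the two assertions of the theorem in turn: first that \emph{at most one} map satisfies \eqref{ft1} and \eqref{ft2}, then that the map given by the right-hand side of \eqref{ft3} does satisfy them; uniqueness then promotes ``a solution exists'' to ``the solution is exactly \eqref{ft3}''.

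For uniqueness I would induct on $e(\bG)$. If $\bG$ is edgeless then \eqref{ft2} determines $U(\pG)$ outright. If $\bG$ has an edge, pick any $e\in E$: both $\pG\backslash e$ and $\pG/e$ have strictly fewer edges (deletion removes $e$, and $\bG/e$ has one fewer edge than $\bG$ by definition of contraction), and the coefficients appearing in \eqref{ft1} depend only on the pair $(\np,\mg)$, which is intrinsic to $(\pG,e)$. Hence $U(\pG)$ is forced by the inductive hypothesis. A posteriori this also shows the recursion \eqref{ft1} is order-independent, since the existence part will supply a genuine solution.

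For existence I would set $U(\pG):=\tau^{|\F|}\kappa^{|\V|}\T(\pG;\boldsymbol{x},\boldsymbol{y})$ under the substitution $x=\alpha$, $y=\beta$, $x_i=a_{2(1-i)}$, $y_i=b_{2(1-i)}$, and verify \eqref{ft1} and \eqref{ft2} directly. For \eqref{ft1}, start from $\T(\pG)=x^{2-\n}\T(\pG\backslash e)+y^{2-\m}\T(\pG/e)$ (Theorem~\ref{dcthm}), multiply by $\tau^{|\F|}\kappa^{|\V|}$, and compare prefactors. From the definitions of deletion and contraction of packaged ribbon graphs, deletion fixes $\V$ and fixes the number of blocks of $\F$ in Cases~\ref{dcase1} and~\ref{dcase2} ($\n=1$) but merges two $\F$-blocks in Case~\ref{dcase3} ($\n=2$), so $|\F(\pG\backslash e)|=|\F|-(\n-1)$; dually $|\V(\pG/e)|=|\V|-(\m-1)$ while $|\F|$ is preserved under contraction. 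Collecting powers of $\tau$ and $\alpha$ turns the first term into $\alpha\,U(\pG\backslash e)$ or $\tau\,U(\pG\backslash e)$ according as $\n=1$ or $2$, and collecting powers of $\kappa$ and $\beta$ turns the second into $\beta\,U(\pG/e)$ or $\kappa\,U(\pG/e)$ according as $\m=1$ or $2$; their sum is precisely the four cases of \eqref{ft1}. For \eqref{ft2}, substitute the edgeless formula of Theorem~\ref{dcthm} into the definition of $U$, distribute $\tau^{|\F|}\kappa^{|\V|}=\prod_{[f]\in\F}\tau\cdot\prod_{[v]\in\V}\kappa$ over the two products, and check factor by factor that the change of variables matches the half-integer-indexed genus variables of $\T$ against the integer-indexed $a_\bullet,b_\bullet$ appearing in \eqref{ft2}.

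The step I expect to be the main obstacle is the prefactor bookkeeping for \eqref{ft1}: one must be sure that a block of $\F$ is lost under deletion (resp.\ a block of $\V$ under contraction) \emph{exactly} when $\n=2$ (resp.\ $\m=2$), across all cases, and that this single exponent shift in $\tau$ (resp.\ $\kappa$), together with $x=\alpha$ and $y=\beta$, converts the uniform weights ``$x^{2-\n}$'' and ``$y^{2-\m}$'' of Theorem~\ref{dcthm} into the correct one of the coefficient pairs $(\alpha,\beta)$, $(\alpha,\kappa)$, $(\tau,\beta)$, $(\tau,\kappa)$. Once this is pinned down, the uniqueness induction and the factor-by-factor check of the edgeless base case are routine, the analytic heart of the matter (the $\Theta/\Phi$ duality and Proposition~\ref{dualitydelcon}) being already internal to Theorem~\ref{dcthm}.
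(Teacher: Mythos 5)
Your proposal is correct and follows essentially the same route as the paper: uniqueness by induction on the number of edges (the paper just calls it "immediate"), and existence by multiplying the deletion--contraction relation of Theorem~\ref{dcthm} by $\tau^{|\F|}\kappa^{|\V|}$ and tracking that $|\F|$ drops by one exactly when deleting with $\eta(e)=2$ and $|\V|$ drops by one exactly when contracting with $\mu(e)=2$. The prefactor bookkeeping you flag as the main obstacle is precisely the observation the paper's proof records, so there is nothing further to add.
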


\begin{proof}
    As every edge type is accounted for by the deletion-contraction relations and the edgeless case is defined, uniqueness is immediate. Thus it is enough to show that~\eqref{ft3} satisfies~\eqref{ft1} and~\eqref{ft2}.
    
    The fact that $U(\pG)\in \mathbb{Z}[\alpha,\beta,\kappa,\tau,\boldsymbol{a},\boldsymbol{b}]$ follows from the degrees of $\alpha$ and $\beta$ being the nullity of a graph, namely the two packagings $G(\bG;\V)$ and $G(\bG^*;\F)$, and the products being taken over the connected components. 

    We check that the expression $\tau^{|\F|}\kappa^{|\V|}\T(\pG;\boldsymbol{x},\boldsymbol{y})$, where $x=\alpha$, $y=\beta$, $x_i= a_{2(1-i)}$, $y_i= b_{2(1-i)}$ for $i=0,1,\dots$, satisfies the deletion-contraction relations in~\eqref{ft1}. Suppose there exists some edge $e\in E$. By Theorem \ref{dcthm}, \[\T(\pG;\boldsymbol{x},\boldsymbol{y})=\alpha^{2-\n}\T(\pG\ba e;\boldsymbol{x},\boldsymbol{y})+\beta^{2-\m}\T(\pG/e;\boldsymbol{x},\boldsymbol{y}).\]
    Additionally, observe that $|\F|$ is unchanged by contracting an edge, and by deleting an edge when $\n=1$. However, $|\F|$ decreases by one when deleting an edge where $\n=2$. Similarly, $|\V|$ is unchanged by deleting an edge, and by contracting an edge where $\m=1$. But $|\V|$ decreases by one by contracting an edge where $\m=2$. Hence, $\tau^{|\F|}\kappa^{|\V|}\T(\pG;\boldsymbol{x},\boldsymbol{y})$ satisfies the deletion-contraction relations as specified in the theorem.
    
    Finally, the edgeless case follows by evaluating~\eqref{ft3}.
    \end{proof}

\section{Connections with other topological Tutte polynomials}\label{aghd}
As discussed in Section~\ref{sint}, the significance of our packaged ribbon graph polynomial is that it unifies two families of topological Tutte polynomials: it specialises to both the surface Tutte polynomial of~\cite{maps1} and the canonical topological Tutte polynomials of~\cite{HM,KMT}. Moreover, it provides a recursive deletion-contraction definition for the surface Tutte polynomial. 
We prove these results in this section. 

To provide context for our results, we begin with a brief overview of the main topological Tutte polynomials in the literature. 
As their exact forms are not needed here, we shall generally omit precise definitions of these polynomials. Individual definitions can be found in the source papers or collectively in \cite[Chapter~27]{zbMATH07553843} or \cite[Section~4]{HM}. The relations between the polynomials discussed here is summarised in Figure~\ref{rhj}.

\begin{figure}
\centering
\scalebox{0.7}{\tikzset{every picture/.style={line width=0.75pt}} 

\begin{tikzpicture}[x=0.75pt,y=0.75pt,yscale=-1,xscale=1]

\draw   (228,1) -- (431,1) -- (431,49) -- (228,49) -- cycle ;
\draw   (84,61) -- (201,61) -- (201,109) -- (84,109) -- cycle ;
\draw   (440,61) -- (601,61) -- (601,109) -- (440,109) -- cycle ;
\draw    (431,28) -- (497.19,59.15) ;
\draw [shift={(499,60)}, rotate = 205.2] [color={rgb, 255:red, 0; green, 0; blue, 0 }  ][line width=0.75]    (10.93,-3.29) .. controls (6.95,-1.4) and (3.31,-0.3) .. (0,0) .. controls (3.31,0.3) and (6.95,1.4) .. (10.93,3.29)   ;
\draw    (228,29) -- (152.86,59.25) ;
\draw [shift={(151,60)}, rotate = 338.07] [color={rgb, 255:red, 0; green, 0; blue, 0 }  ][line width=0.75]    (10.93,-3.29) .. controls (6.95,-1.4) and (3.31,-0.3) .. (0,0) .. controls (3.31,0.3) and (6.95,1.4) .. (10.93,3.29)   ;
\draw   (131,162) -- (271,162) -- (271,209) -- (131,209) -- cycle ;
\draw  [color={rgb, 255:red, 0; green, 0; blue, 0 }  ,draw opacity=1 ] (393,151) -- (509,151) -- (509,184) -- (393,184) -- cycle ;
\draw  [color={rgb, 255:red, 0; green, 0; blue, 0 }  ,draw opacity=1 ] (546,151) -- (649,151) -- (649,183) -- (546,183) -- cycle ;
\draw [color={rgb, 255:red, 0; green, 0; blue, 0 }  ,draw opacity=1 ]   (570,108) -- (589.12,147.2) ;
\draw [shift={(590,149)}, rotate = 244] [color={rgb, 255:red, 0; green, 0; blue, 0 }  ,draw opacity=1 ][line width=0.75]    (10.93,-3.29) .. controls (6.95,-1.4) and (3.31,-0.3) .. (0,0) .. controls (3.31,0.3) and (6.95,1.4) .. (10.93,3.29)   ;
\draw [color={rgb, 255:red, 0; green, 0; blue, 0 }  ,draw opacity=1 ]   (455,110) -- (439.76,147.15) ;
\draw [shift={(439,149)}, rotate = 292.31] [color={rgb, 255:red, 0; green, 0; blue, 0 }  ,draw opacity=1 ][line width=0.75]    (10.93,-3.29) .. controls (6.95,-1.4) and (3.31,-0.3) .. (0,0) .. controls (3.31,0.3) and (6.95,1.4) .. (10.93,3.29)   ;
\draw  [dash pattern={on 4.5pt off 4.5pt}]  (150,110) -- (177.07,161.23) ;
\draw [shift={(178,163)}, rotate = 242.15] [color={rgb, 255:red, 0; green, 0; blue, 0 }  ][line width=0.75]    (10.93,-3.29) .. controls (6.95,-1.4) and (3.31,-0.3) .. (0,0) .. controls (3.31,0.3) and (6.95,1.4) .. (10.93,3.29)   ;
\draw    (439,77) -- (231.86,160.25) ;
\draw [shift={(230,161)}, rotate = 338.1] [color={rgb, 255:red, 0; green, 0; blue, 0 }  ][line width=0.75]    (10.93,-3.29) .. controls (6.95,-1.4) and (3.31,-0.3) .. (0,0) .. controls (3.31,0.3) and (6.95,1.4) .. (10.93,3.29)   ;
\draw   (46,265) -- (166,265) -- (166,312) -- (46,312) -- cycle ;
\draw   (261,264) -- (419,264) -- (419,312) -- (261,312) -- cycle ;
\draw    (141,210) -- (116.84,262.19) ;
\draw [shift={(116,264)}, rotate = 294.84] [color={rgb, 255:red, 0; green, 0; blue, 0 }  ][line width=0.75]    (10.93,-3.29) .. controls (6.95,-1.4) and (3.31,-0.3) .. (0,0) .. controls (3.31,0.3) and (6.95,1.4) .. (10.93,3.29)   ;
\draw    (270,209) -- (317.64,260.53) ;
\draw [shift={(319,262)}, rotate = 227.25] [color={rgb, 255:red, 0; green, 0; blue, 0 }  ][line width=0.75]    (10.93,-3.29) .. controls (6.95,-1.4) and (3.31,-0.3) .. (0,0) .. controls (3.31,0.3) and (6.95,1.4) .. (10.93,3.29)   ;
\draw [color={rgb, 255:red, 0; green, 0; blue, 0 }  ,draw opacity=1 ]   (420,185) -- (381.9,260.22) ;
\draw [shift={(381,262)}, rotate = 296.86] [color={rgb, 255:red, 0; green, 0; blue, 0 }  ,draw opacity=1 ][line width=0.75]    (10.93,-3.29) .. controls (6.95,-1.4) and (3.31,-0.3) .. (0,0) .. controls (3.31,0.3) and (6.95,1.4) .. (10.93,3.29)   ;
\draw [color={rgb, 255:red, 0; green, 0; blue, 0 }  ,draw opacity=1 ]   (555,184) -- (421.58,286.78) ;
\draw [shift={(420,288)}, rotate = 322.39] [color={rgb, 255:red, 0; green, 0; blue, 0 }  ,draw opacity=1 ][line width=0.75]    (10.93,-3.29) .. controls (6.95,-1.4) and (3.31,-0.3) .. (0,0) .. controls (3.31,0.3) and (6.95,1.4) .. (10.93,3.29)   ;
\draw   (279,351) -- (420,351) -- (420,399) -- (279,399) -- cycle ;
\draw    (350,312) -- (350,348) ;
\draw [shift={(350,350)}, rotate = 270] [color={rgb, 255:red, 0; green, 0; blue, 0 }  ][line width=0.75]    (10.93,-3.29) .. controls (6.95,-1.4) and (3.31,-0.3) .. (0,0) .. controls (3.31,0.3) and (6.95,1.4) .. (10.93,3.29)   ;
\draw    (131,312) -- (276.11,361.36) ;
\draw [shift={(278,362)}, rotate = 198.79] [color={rgb, 255:red, 0; green, 0; blue, 0 }  ][line width=0.75]    (10.93,-3.29) .. controls (6.95,-1.4) and (3.31,-0.3) .. (0,0) .. controls (3.31,0.3) and (6.95,1.4) .. (10.93,3.29)   ;
\draw [color={rgb, 255:red, 0; green, 0; blue, 0 }  ,draw opacity=1 ]   (611,184) -- (422.43,368.6) ;
\draw [shift={(421,370)}, rotate = 315.61] [color={rgb, 255:red, 0; green, 0; blue, 0 }  ,draw opacity=1 ][line width=0.75]    (10.93,-3.29) .. controls (6.95,-1.4) and (3.31,-0.3) .. (0,0) .. controls (3.31,0.3) and (6.95,1.4) .. (10.93,3.29)   ;
\draw   (300,433) -- (401,433) -- (401,481) -- (300,481) -- cycle ;
\draw    (350,399) -- (350,430) ;
\draw [shift={(350,432)}, rotate = 270] [color={rgb, 255:red, 0; green, 0; blue, 0 }  ][line width=0.75]    (10.93,-3.29) .. controls (6.95,-1.4) and (3.31,-0.3) .. (0,0) .. controls (3.31,0.3) and (6.95,1.4) .. (10.93,3.29)   ;
\draw [color={rgb, 255:red, 0; green, 0; blue, 0 }  ,draw opacity=1 ]   (489,185) -- (412.85,347.19) ;
\draw [shift={(412,349)}, rotate = 295.15] [color={rgb, 255:red, 0; green, 0; blue, 0 }  ,draw opacity=1 ][line width=0.75]    (10.93,-3.29) .. controls (6.95,-1.4) and (3.31,-0.3) .. (0,0) .. controls (3.31,0.3) and (6.95,1.4) .. (10.93,3.29)   ;

\draw (296,26.4) node [anchor=north west][inner sep=0.75pt]    {$\boldsymbol{T}(\mathbb{\underline{G}} ;\boldsymbol{x} ,\boldsymbol{y})$};
\draw (276,5) node [anchor=north west][inner sep=0.75pt]   [align=left] {Packaged Tutte};
\draw (109,86.4) node [anchor=north west][inner sep=0.75pt]    {$\mathcal{T}(\mathbb{G} ;\boldsymbol{x} ,\boldsymbol{y})$};
\draw (97,65) node [anchor=north west][inner sep=0.75pt]   [align=left] {Surface Tutte};
\draw (461,85.4) node [anchor=north west][inner sep=0.75pt]    {$T_{p}{}_{s}(\mathbb{G} ;w,x,y,z)$};
\draw (448,65) node [anchor=north west][inner sep=0.75pt]   [align=left] {Pseudo-surface Tutte};
\draw (150,187.4) node [anchor=north west][inner sep=0.75pt]    {$K(\mathbb{G} ;x,y,a,b)$};
\draw (171,167) node [anchor=north west][inner sep=0.75pt]   [align=left] {Krushkal};
\draw (397,159.4) node [anchor=north west][inner sep=0.75pt]  [color={rgb, 255:red, 0; green, 0; blue, 0 }  ,opacity=1 ]  {$T_{cp}{}_{s}(\mathbb{G} ;w,x,y)$};
\draw (553,158.4) node [anchor=north west][inner sep=0.75pt]  [color={rgb, 255:red, 0; green, 0; blue, 0 }  ,opacity=1 ]  {$T_{s}(\mathbb{G} ;x,y,z)$};
\draw (63,291.4) node [anchor=north west][inner sep=0.75pt]    {$L(\mathbb{G} ;x,y,z)$};
\draw (63,271) node [anchor=north west][inner sep=0.75pt]   [align=left] {Las Vergnas};
\draw (298,287.4) node [anchor=north west][inner sep=0.75pt]    {$R(\mathbb{G} ;x,y,z)$};
\draw (276,269) node [anchor=north west][inner sep=0.75pt]   [align=left] {Bollob\'as--Riordan};
\draw (315,376.4) node [anchor=north west][inner sep=0.75pt]    {$R(\mathbb{G} ;x,y)$};
\draw (303,356) node [anchor=north west][inner sep=0.75pt]   [align=left] {Ribbon Graph};
\draw (333,437) node [anchor=north west][inner sep=0.75pt]   [align=left] {Tutte};
\draw (316,459.4) node [anchor=north west][inner sep=0.75pt]    {$T( G;x,y)$};
\draw (68,118) node [anchor=north west][inner sep=0.75pt]   [align=left] {cellularly\\embedded};

\end{tikzpicture}}
\caption{The relationship between  $\T(\pG;\boldsymbol{x},\boldsymbol{y})$ and other topological Tutte polynomials.}
\label{rhj}
\end{figure}

 Our first polynomial  is the \emph{Las~Vergnas polynomial} $L(\bG;x,y,z)$ introduced in the 1980 paper~\cite{zbMATH03722664}. This is a polynomial of graphs cellularly embedded in surfaces that arises as a special instance of his Tutte polynomial of matroid perspectives. 
Unfortunately, the  Las~Vergnas polynomial did not receive the attention it deserved and it took another 20 years until the introduction of the \emph{Bollob\'as--Riordan polynomial} of a ribbon graph in~\cite{bollobasriordanpoly,zbMATH01801590},  $R(\bG;x,y,z)$, for the field of topological Tutte polynomials to begin in earnest. Much of the development of this polynomial has focussed on the \emph{ribbon graph polynomial},  $R(\bG;x,y)$, which arises as a 2-variable specialisation of the  Bollob\'as--Riordan polynomial. 
The \emph{Krushkal polynomial}, $K(\bG;x,y,a,b)$,  was introduced around ten years later in~\cite{krushkalpoly}. This is a polynomial of graphs that are (not-necessarily cellularly) embedded in surfaces. It was shown to specialise to the  Bollob\'as--Riordan polynomial in~\cite{krushkalpoly} and the Las~Vergnas polynomial in~\cite{zbMATH06127547}. All these polynomials specialise to the Tutte polynomial and in more than one way.

Of the above polynomials, only the ribbon graph polynomial has a full deletion-contraction definition akin to that for the Tutte polynomial (see the discussion around~\eqref{tp2}). However, by enlarging their domains it is possible to find extensions of them that do have full deletion-contraction definitions. This was first done for the Las~Vergnas polynomial in~\cite{zbMATH06473566} by extending its domain to graphs in pseudo-surfaces; then, using the theory of \emph{canonical Tutte polynomials}, for the Bollob\'as--Riordan polynomial and the Krushkal polynomial of cellularly embedded graphs in~\cite{zbMATH06473566} by extending their domains to ribbon graphs whose vertex set is partitioned; and for the full Krushkal polynomial in~\cite{HM} by extending its domain to graphs in pseudo-surfaces (see also the delta-matroid extension in~\cite{zbMATH06951578}).  This Krushkal polynomial extension is known as the \emph{Tutte polynomial of a coloured ribbon graph} or the \emph{Tutte polynomial of graphs in pseudo-surfaces} depending upon context.
In our notation, it is defined as follows.

\begin{definition}\label{pndfg}
Let $\bG=(V,E)$ be a  ribbon graph with a partition $\V$ of its vertex set and $\F$ a partition of its set of boundary components. Then 
\[  T_{ps}(\bG;w,x,y,z):=  \sum_{A\subseteq E}   w^{r_1(E) -r_1(A) } x^{r_2(E)-r_2(A)} y^{r_3(A)}  z^{r_4(A)} ,  \]
where
\begin{align*}
r_1(A) &:= r(G(\bG|A;\V)),     \\
r_2(A) &:= r(\bG|A)+g(\bG|A)- r(G(\bG|A;\V)),\\
r_3(A) &:=      r(G(\bG^*;\F)) - r(G(\bG^*|A^c;\F)),\\
 r_4(A) &:= |A| +r(G(\bG^*|A^c;\F)) - r(G(\bG^*;\F)) -r(\bG|A)-g(\bG|A).
 \end{align*}
\end{definition}
All of the polynomials mentioned above can be recovered from $T_{ps}(\bG;w,x,y,z)$. See~\cite{HM} for details. 
We note two significant specialisations of this polynomial:
$T_{cps}(\bG;w,x,y)$ and $T_{s}(\bG;x,y,z)$. These are polynomials for graphs cellularly embedded in pseudo-surfaces, and emebdded in surfaces, respectively. They are related by duality and are extensions of the Bollob\'as--Riordan polynomial that have full deletion-contraction definitions.

Finally, the most recently defined topological Tutte polynomial we consider is the \emph{surface Tutte polynomial} introduced in~\cite{maps1} and extended to non-orientable surfaces in~\cite{maps2}. This polynomial was motivated by Tutte's introduction of the dichromate as a polynomial that counts both the number of  proper $k$-colourings and the number of nowhere-zero $\mathbb{Z}_k$-flows in a graph. In particular, the surface Tutte polynomial  counts both the  local flows and local tensions in a graph cellularly embedded in a surface (this aspect of the polynomial is discussed in Section~\ref{daha}).  
In our notation, the surface Tutte polynomial is defined as follows. 

\begin{definition}\label{bklih}
Let $\bG$ be an orientable ribbon graph. Then the \emph{surface Tutte polynomial} $\Tt(\bG;\boldsymbol{x},\boldsymbol{y})$, where $\boldsymbol{x}=(x,x_0,x_1,\ldots),\boldsymbol{y}=(y,y_0,y_1,\ldots)$, is defined as 
    \[\Tt(\bG;\boldsymbol{x},\boldsymbol{y}):=\sum_{A\subseteq E} x^{n(\bG^*|A^c)}y^{n(\bG|A)}
    \prod_{\substack{\text{cpt. } \\ \bG_i \text{ of } \bG/A}} x_{g(\bG_i)} 	
    \prod_{\substack{\text{cpt. } \\ \bG_j \text{ of } \bG|A}} y_{g(\bG_j)}\]
where $A^c=E\setminus A$.
\end{definition}
The surface Tutte polynomial specialises to the Krushkal polynomial of a cellularly embedded graph and therefore to the Bollob\'{a}s--Riordan, Las~Vergnas and Tutte polynomial (see~\cite{maps1} for details). However, it does not have a known full deletion-contraction definition, and it does not specialise to $T_{ps}(\bG;w,x,y,z)$ or any of the  extensions of Krushkal, Bollob\'{a}s--Riordan, or Las~Vernas polynomials that have deletion-contraction definitions.  

The following theorem shows that \emph{all} of the above topological Tutte polynomials  can be recovered from our polynomial $\Tt(\bG;\boldsymbol{x}, \boldsymbol{y})$.

\begin{theorem}\label{asg}
Both the surface Tutte polynomial and the Tutte polynomial of graphs in pseudo-surfaces can be recovered from the Tutte polynomial of a packaged ribbon graph, as follows.
\begin{enumerate}
\item Let $\bG=(V,E)$ be an orientable ribbon graph and $F$ be its set of boundary components.  Also, let $\pG $ be the  packaged ribbon $(\bG, \{  \{v\}:v\in V \} , 0_V, \{ \{f\}:f\in F \} , 0_F )$, where  $0_{\cdot}$ denotes the zero-map. Then,
\[ \T(\pG;\boldsymbol{x},\boldsymbol{y})=\Tt(\bG;\boldsymbol{x},\boldsymbol{y}) .\]

\item 
Let $\bG=(V,E)$ be an orientable  ribbon graph with a partition $\V$ of its vertex set and $\F$ a partition of its set of boundary components. Furthermore, let $\pG$ be the packaged ribbon graph obtained from this by weighting each block of the partitions with zero. Then when 
$x=1$, $y=1$, $x_g=cd^{-g}$, $y_g=ab^{-g}$ we have, 
\[
T_{ps}(\bG;a,b,c,d) =
(ab)^{k(G(\bG;\V))} 
    b^{ (|E|-|\V|-f(\bG))/2}
    c^{-k(G(\bG^*;\F))  }
    d^{(|E|-|\F|+v(\bG))/2}
    \T(\pG;\boldsymbol{x},\boldsymbol{y}) .
\]
\end{enumerate}

\end{theorem}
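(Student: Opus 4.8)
The plan is to prove both parts by comparing, summand by summand over $A\subseteq E$, the state sum of $\T(\pG;\boldsymbol{x},\boldsymbol{y})$ with that of the target polynomial; in neither case does one need to reason about the deletion--contraction relations.

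For part (1), observe that when every block of $\V$ and of $\F$ is a singleton the packagings are trivial: $G(\bG|A;\V)$ is just the underlying graph of $\bG|A$ and $G(\bG^*|A^c;\F)$ that of $\bG^*|A^c$, so $n(G(\bG|A;\V))=n(\bG|A)$ and $n(G(\bG^*|A^c;\F))=n(\bG^*|A^c)$. For a connected component $K$ of $G(\bG|A;\V)$ the associated ribbon subgraph $\bG[K]$ is precisely the corresponding component of $\bG|A$, so, using $\wV(K)=0$ and $k(K)=1$, the formula for $g(\bG,K)$ collapses to the ribbon-graph genus $\tfrac12(2k-f+e-v)$ of that component; the same holds for $\bG^*$. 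Finally $(\bG\backslash e)^*=\bG^*/e$ gives $(\bG/A)^*=\bG^*|A^c$, and geometric duality preserves connected components and genus, so the product $\prod_{H}x_{g(\bG^*,H)}$ over components of $G(\bG^*|A^c;\F)$ coincides with $\prod x_{g(\bG_i)}$ over the components $\bG_i$ of $\bG/A$. Hence each $A$-summand of $\T(\pG;\boldsymbol{x},\boldsymbol{y})$ equals the corresponding $A$-summand of $\Tt(\bG;\boldsymbol{x},\boldsymbol{y})$.

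For part (2), substitute $x=1$, $y=1$, $x_g=cd^{-g}$, $y_g=ab^{-g}$ into Definition~\ref{fhasj}. Because the weights vanish, $\prod_K y_{g(\bG,K)}$ supplies one factor $a$ per component of $G(\bG|A;\V)$ and $\prod_H x_{g(\bG^*,H)}$ one factor $c$ per component of $G(\bG^*|A^c;\F)$, so the $A$-summand becomes $a^{\,k(G(\bG|A;\V))}b^{-\Sigma_K}c^{\,k(G(\bG^*|A^c;\F))}d^{-\Sigma_H}$, where $\Sigma_K:=\sum_K g(\bG,K)$ and $\Sigma_H:=\sum_H g(\bG^*,H)$. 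The aim is to rewrite each of these four exponents, for fixed $A$, as the matching exponent of the defining sum of $T_{ps}(\bG;a,b,c,d)$ (Definition~\ref{pndfg}) plus a term independent of $A$; the $A$-independent parts then factor out of the sum as the asserted monomial coefficient. For the $a$- and $c$-exponents this is immediate, since $k(G(\bG|A;\V))=|\V|-r_1(A)$ and $k(G(\bG^*|A^c;\F))=|\F|-r(G(\bG^*|A^c;\F))=k(G(\bG^*;\F))+r_3(A)$, while $T_{ps}$ carries $w^{r_1(E)-r_1(A)}$ and $y^{r_3(A)}$. For the $b$- and $d$-exponents, use additivity of $k$, $e$, $v$, the weight functions and $f(\bG[\cdot])$ over connected components to obtain $\Sigma_K=\tfrac12\big(2k(G(\bG|A;\V))+|A|-|\V|-f(\bG|A)\big)$, and the analogous identity for $\Sigma_H$ with $\bG,\V,A$ replaced by $\bG^*,\F,A^c$; then convert these to $r_2(A)$ and $r_4(A)$ via the elementary relation $r(\bG|A)+g(\bG|A)=\tfrac12(v(\bG)+|A|-f(\bG|A))$ together with the ribbon-graph identity $f(\bG^*|A^c)=v(\bG/A)=f(\bG|A)$ (which follows from $\bG^*|A^c=(\bG/A)^*$). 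Finally read off the leftover $A$-free factors.

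I expect the main obstacle to be the $d$-part, namely matching $-\Sigma_H$ with $r_4(A)$ up to an $A$-free factor: $r_4$ is by far the most complicated of the four rank functions and it intertwines parameters of $\bG|A$ with those of the dual packaging $G(\bG^*|A^c;\F)$, so the reduction rests on the identity $f(\bG^*|A^c)=f(\bG|A)$ and on careful bookkeeping keeping $|\V|$ distinct from $v(\bG)$ and $|\F|$ distinct from $f(\bG)$. A useful organising principle---and a consistency check on the resulting monomial---is the duality symmetry of the whole statement under $\pG\leftrightarrow\pG^*$, $(a,b)\leftrightarrow(c,d)$, $\V\leftrightarrow\F$, which comes from Theorem~\ref{thup} together with the known duality of $T_{ps}$; in fact one may establish the $b$-part in detail and then deduce the $d$-part by applying the same argument to $\pG^*$, thereby avoiding a second messy computation.
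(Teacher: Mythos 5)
Your proposal is correct and takes essentially the same route as the paper: part (1) by direct comparison of the state sums term by term (which the paper dismisses as ``immediate''), and part (2) by substituting into the state sum, using additivity of $k,e,v,\wV,f(\bG[\cdot])$ over components together with Euler's formula $r(\bG|A)+g(\bG|A)=(e(\bG|A)+v(\bG|A)-f(\bG|A))/2$ and the identity $f(\bG^*|A^c)=f(\bG|A)$ to match the four exponents with $r_1,\dots,r_4$ up to $A$-independent factors. Your suggestion to obtain the $d$-exponent from the $b$-exponent by dualising is only a minor organizational variant of the same computation.
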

\begin{proof}
The first item is immediate. For the second,  when $x=1$, $y=1$, $x_g=cd^{-g}$, $y_g=ab^{-g}$, 
   \[\T(\pG;\boldsymbol{x},\boldsymbol{y})= \sum_{A\subseteq E} 
   \prod_{\substack{H \text{ cpt. of } \\  G(\bG^*|A^c;\F)}}   c  d^{-g(\bG^*,H)}
   \prod_{\substack{K \text{ cpt. of } \\  G(\bG|A;\V)}}   a b^{-g(\bG,K)}.\]
By expanding the exponents, we can write this as
\begin{multline*}
 \sum_{A\subseteq E} 
   \prod_{\substack{H \text{ cpt. of } \\  G(\bG^*|A^c;\F)}}   c  d^{ (-2k(H)-e(H)+v(H)+f(\bG^*[H])) /2}
   \prod_{\substack{K \text{ cpt. of } \\  G(\bG|A;\V)}}  a b^{ (-2k(K)-e(K)+v(K)+f(\bG[K])) /2 }
 \\
= \sum_{A\subseteq E} 
   c^{k(G(\bG^*|A^c;\F))}  d^{ (-2k(G(\bG^*|A^c;\F))-e(G(\bG^*|A^c;\F))+v(G(\bG^*|A^c;\F))+f(\bG^*|A^c)) /2}
   \\
 a^{k(G(\bG|A;\V))}  b^{ (-2k(G(\bG|A;\V))-e(G(\bG|A;\V))+v(G(\bG|A;\V))+f(\bG|A)) /2 }
\end{multline*}

Making use of $r(\bG|A)+g(\bG|A)=(e(\bG|A)+ v(\bG|A)-f(\bG|A))/2$   (by Euler's formula), and that $ f(\bG^*|A^c) = f(\bG|A)$ (as the ribbon graphs have the same boundary) this equals
    \begin{multline*}  a^{-k(G(\bG;\V))} 
    b^{(v(G(\bG;\V))-e(\bG)+f(\bG)-2k(G(\bG;\V)))/2}
    c^{k(G(\bG^*;\F))  }
    d^{(v(G(\bG^*;\F))-e(\bG)-v(\bG))/2}
\\
 \sum_{A\subseteq E}  a^{r_1(E) -r_1(A) } b^{r_2(E)-r_2(A)} c^{r_3(A)}  d^{r_4(A)} ,
\end{multline*}
and the result follows.
\end{proof}

As $T_{ps}(\bG;w,x,y,z)$ can be recovered from  $\T(\pG;\boldsymbol{x},\boldsymbol{y})$, the evaluations in \cite[Section~4]{HM} immediately give the following.
\begin{corollary}\label{dbahj}
Each of the Krushkal polynomial, Las Vergnas polynomial, and Bollob\'{a}s--Riordan polynomial, and all of their recursively defined extensions from~\cite{HM} can be recovered from  $\T(\pG;\boldsymbol{x},\boldsymbol{y})$.
\end{corollary}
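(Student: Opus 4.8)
The plan is to obtain the corollary purely by transitivity, using Theorem~\ref{asg}(2) as a bridge from $\T(\pG;\boldsymbol{x},\boldsymbol{y})$ to $T_{ps}(\bG;w,x,y,z)$ and then quoting the specialisations of $T_{ps}$ established in \cite{HM}.

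First I would fix the precise sense of ``recovered from'' in play: a polynomial $P$ is recovered from a polynomial $Q$ if $P$ equals $Q$ after substituting for each variable of $Q$ a (Laurent) monomial in the variables of $P$, up to a prefactor that is a monomial in the variables of $P$ whose exponents depend only on fixed numerical invariants of the combinatorial object (numbers of vertices, edges, boundary components, blocks of the partitions, components of the various packagings, and so on) and not on the edge subset $A$ summed over. The identity in Theorem~\ref{asg}(2) has exactly this shape, with prefactor $(ab)^{k(G(\bG;\V))}\, b^{(|E|-|\V|-f(\bG))/2}\, c^{-k(G(\bG^*;\F))}\, d^{(|E|-|\F|+v(\bG))/2}$. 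The key elementary observation is that this notion of recovery is transitive: composing two admissible substitutions is again an admissible substitution, and the two prefactors multiply to an admissible prefactor.

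Next I would invoke \cite[Section~4]{HM}, where it is shown that, for suitable choices of the vertex partition $\V$ and boundary-component partition $\F$ of $\bG$, the polynomial $T_{ps}(\bG;w,x,y,z)$ specialises to the Krushkal polynomial $K(\bG;x,y,a,b)$, the Las~Vergnas polynomial $L(\bG;x,y,z)$, the Bollob\'as--Riordan polynomial $R(\bG;x,y,z)$ (and hence, as recorded there, the ribbon graph and classical Tutte polynomials), and to each of the recursively defined extensions $T_{cps}(\bG;w,x,y)$ and $T_{s}(\bG;x,y,z)$. Each of these is a specialisation of $T_{ps}$ of the admissible form above. Composing such a specialisation with Theorem~\ref{asg}(2) then expresses the target polynomial as an admissible prefactor times $\T(\pG;\boldsymbol{x},\boldsymbol{y})$ after a substitution in $\boldsymbol{x}$ and $\boldsymbol{y}$, where $\pG$ is $\bG$ equipped with the relevant partitions $\V,\F$ and all block weights set to zero. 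That is precisely the statement of the corollary.

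I do not anticipate a genuine obstacle: the mathematical content is already contained in Theorem~\ref{asg} and the cited evaluations of \cite{HM}. The only thing requiring care is bookkeeping --- checking that the prefactors attached to the various polynomials in \cite{HM} are monomials in invariants of $\bG$ (equivalently of $\pG$, since all weights vanish), so that they compose cleanly with the prefactor of Theorem~\ref{asg}(2), and recording for each target polynomial which partitions $\V,\F$ of $\bG$ are used so that the packaged ribbon graph $\pG$ in the statement is unambiguous. Both are read off directly from \cite{HM}.
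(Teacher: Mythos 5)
Your proposal is correct and matches the paper's argument: the corollary is deduced by composing Theorem~\ref{asg}(2) with the specialisations of $T_{ps}$ established in \cite[Section~4]{HM}, exactly as you describe. Your extra care in making the notion of ``recovered from'' precise and checking its transitivity is a reasonable elaboration of what the paper leaves implicit.
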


The next result is an immediate corollary of Threorems~\ref{dcthm} and~\ref{asg}. We state it as a theorem for emphasis.
\begin{theorem}\label{thre}
The surface Tutte polynomial can be computed recursively through deletion-contraction relations.
\end{theorem}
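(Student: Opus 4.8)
The plan is to derive Theorem~\ref{thre} directly from Theorems~\ref{dcthm} and~\ref{asg}, with essentially no new work. Given an orientable ribbon graph $\bG=(V,E)$ with boundary component set $F$, I would first pass to the packaged ribbon graph $\pG=(\bG,\{\{v\}:v\in V\},0_V,\{\{f\}:f\in F\},0_F)$ having discrete partitions and identically zero weights. By Theorem~\ref{asg}(1), $\Tt(\bG;\boldsymbol{x},\boldsymbol{y})=\T(\pG;\boldsymbol{x},\boldsymbol{y})$, so it suffices to exhibit a recursive computation of the right-hand side.

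That recursive computation is exactly the content of Theorem~\ref{dcthm}: for any edge $e\in E$ one has $\T(\pG;\boldsymbol{x},\boldsymbol{y})=x^{2-\n}\T(\pG\backslash e;\boldsymbol{x},\boldsymbol{y})+y^{2-\m}\T(\pG/e;\boldsymbol{x},\boldsymbol{y})$, and on an edgeless packaged ribbon graph $\T$ is given explicitly as a product over the blocks of the two partitions. Since both $\pG\backslash e$ and $\pG/e$ have one fewer edge than $\pG$, iterating the relation terminates after $|E|$ steps in a finite binary recursion tree whose leaves are edgeless packaged ribbon graphs; summing the leaf evaluations, each weighted by the accumulated product of the monomials $x^{2-\n}$ and $y^{2-\m}$ along the corresponding branch, computes $\T(\pG;\boldsymbol{x},\boldsymbol{y})$ and hence $\Tt(\bG;\boldsymbol{x},\boldsymbol{y})$. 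Figure~\ref{PolyDC} and Example~\ref{sag} already illustrate exactly this procedure, so in the write-up I would simply point to them.

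The one point I would take care to emphasise — and the reason this argument must route through the packaged setting rather than being available for $\Tt$ on its own domain — is that the recursion does not remain inside the class of (unpackaged) ribbon graphs: even starting from $\pG$ with trivial packaging, the operations $\pG\backslash e$ and $\pG/e$ genuinely produce blocks of size greater than one and nonzero block weights, as recorded in Tables~\ref{Del} and~\ref{Con}. Thus the intermediate terms are bona fide packaged ribbon graphs and not surface Tutte polynomials of ribbon graphs, which is precisely why $\Tt$ admits no deletion--contraction reduction on its original domain but acquires one once the domain is enlarged. I do not anticipate a genuine obstacle here: all the edge-type bookkeeping ($\n=1$ versus $\n=2$, and $\m=1$ versus $\m=2$) was already carried out in the proof of Theorem~\ref{dcthm}, so the only thing to state carefully is the reduction of $\Tt$ to $\T$ via Theorem~\ref{asg}(1) and then the invocation of that deletion--contraction relation; the statement is an immediate corollary.
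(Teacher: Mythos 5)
Your argument is exactly the paper's: Theorem~\ref{thre} is stated there as an immediate corollary of Theorems~\ref{dcthm} and~\ref{asg}, obtained by passing to the trivially packaged ribbon graph, identifying $\Tt(\bG;\boldsymbol{x},\boldsymbol{y})$ with $\T(\pG;\boldsymbol{x},\boldsymbol{y})$, and then running the deletion--contraction recursion of Theorem~\ref{dcthm}. Your additional remark that the recursion leaves the class of unpackaged ribbon graphs (which is why the domain must be enlarged) is correct and consistent with the paper's discussion.
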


\section{Recursively counting local flows and local tensions}\label{daha}

The surface Tutte polynomial has been shown to count the number of local flows and local tensions for a ribbon graph~\cite{maps1}. We now apply our results  to obtain deletion-contraction relations for the numbers of  local flows and local tensions.

Let  $\bG=(V,E)$ be an oriented ribbon graph and $F$ be its set of boundary components. Each boundary component inherits an orientation from $\bG$.  Furthermore,  direct each edge of $\bG$. Each edge thus consists of two half-edges, one of which is the head and the other the tail. If $h$ is a half-edge we set
\begin{equation*}
        a(h):=
        \begin{cases}
          1 &\text{if $h$ is the head of $e$},\\
          -1 &\text{if $h$ is the tail of $e$}.
        \end{cases}
    \end{equation*}
Each edge contains two segments that are part of  the boundary components of $\bG$. We call these the \emph{edge-sides}. If an edge is directed, then its edge-sides are directed in the same direction. For each edge, the direction of one edge-side will agree with the orientation of the boundary component it sits on, the other will disagree. For an edge-side $h$ we set 
\begin{equation*}
        b(h):=
        \begin{cases}
          1 &\text{if the direction of $h$ disagrees with the boundary orientation},\\
          -1 &\text{if the direction of $h$ agrees with the boundary orientation}.
        \end{cases}
    \end{equation*}
Now suppose that 
$\Gamma$ is a finite group and $\gamma:E\to\Gamma$ is a mapping that assigns a group element to each edge. 
 We say $\gamma$ is a \emph{local $\Gamma$-flow} if for each vertex $v$ we have that 
 \[(\gamma(e_1))^{a(h_1)}(\gamma(e_2))^{a(h_2)}\dotsc (\gamma(e_n))^{a(h_n)} = 1,\]
where $h_1, h_2, ..., h_n$ are the cyclically ordered incident half-edges incident to $v$, and each half-edge $h_i$ belongs to the edge $e_i$.
It is a \emph{nowhere-identity local $\Gamma$-flow} for $\bG$ if it is a local $\Gamma$-flow  and no edge is mapped to the identity element. 

Similarly, $\gamma$ is a \emph{local $\Gamma$-tension} if for each boundary component  $f$ we have that 
 \[(\gamma(e_1))^{b(h_1)}(\gamma(e_2))^{b(h_2)}\dotsc (\gamma(e_n))^{b(h_n)} = 1,\]
where $h_1, h_2, ..., h_n$ are the cyclically ordered edge-sides met when following $f$, and each edge-side $h_i$ belongs to the edge $e_i$.
It is a \emph{nowhere-identity local $\Gamma$-tension} for $\bG$ if it is a local   $\Gamma$-tension and no edge is mapped to the identity element.

We denote the number of local $\Gamma$-flows by $q_\Gamma^1(\bG)$ and the number of nowhere-identity local $\Gamma$-flows by $q_\Gamma(\bG)$. 
Similarly, we denote the number of local $\Gamma$-tensions by $p_\Gamma^1(\bG)$ and the number of nowhere-identity local $\Gamma$-tensions by $p_\Gamma(\bG)$. 
 It was shown in~\cite{maps1} that all four of these counts are independent of the choice of  edge directions for $\bG$.
The following formulas  for $q_\Gamma^1(\bG)$ and $q_\Gamma(\bG)$ were also given in this reference. Suppose $\bG$ is a connected  ribbon graph, and $\Gamma$ is a finite group whose irreducible representations have dimensions $n_1,...,n_k$. Then
\[q^1_\Gamma(\bG)=|\Gamma|^{e(\bG)-v(\bG)}\sum_{k} n_{k}^{f(\bG)-e(\bG)+v(\bG)},\]
    and 
\[q_\Gamma(\bG)=\sum_{A\subseteq E}(-1)^{|A^c|}q_\Gamma^1(\bG|A).\]

We state the corresponding results for $p_\Gamma^1(\bG)$ and $p_\Gamma(\bG)$  using the observation from~\cite{maps1} that $p_\Gamma^1(\bG)=q_\Gamma^1(\bG^*)$ and $p_\Gamma(\bG)=q_\Gamma(\bG^*)$. Suppose $\bG$ is a connected ribbon graph, and $\Gamma$ is a finite group whose irreducible representations have dimensions $n_1,...,n_k$. Then
\[p^1_\Gamma(\bG)=|\Gamma|^{e(\bG)-f(\bG)}\sum_{k} n_{k}^{v(\bG)-e(\bG)+f(\bG)},\]
    and 
\[p_\Gamma(\bG)=\sum_{A\subseteq E}(-1)^{|A^c|}p_\Gamma^1(\bG/A^c).\]

\medskip 

We will now show that the packaged surface Tutte polynomial counts local flows and tensions, and use this to obtain  deletion-contraction relations for the counts. For this we use the following notation.

Given a packaged ribbon graph $\pG=(\bG,\V,\wV,\F,\wF)$, we denote its associated weighted vertex packaged ribbon graph by $\pG_\V:=(\bG,\V,\wV)$. Similarly, we denote its associated weighted boundary component packaged ribbon graph by $\pG_\F:=(\bG,\F,\wF)$. 
Where it makes sense, the terminology and notation for a packaged ribbon graph  $\pG$ is extended to $\pG_\V$ and $\pG_\F$ in the obvious way. 
Now suppose $K$ is a subgraph of the packaging $G(\bG;\V)$, we define \[\pG_\V[K]:=(\bG[K],\V',\wV'),\] where $\V'$ is the restriction of $\V$ to the vertex set of $\bG[K]$, and $\wV'$ is the restriction of $\wV$ to the blocks of $\V'$. Now suppose $H$ is a subgraph of the packaging $G(\bG^*;\F)$. We define \[\pG_\F[H]:=((\bG^*[H])^*,\F',\wF').\] 
Since, by Proposition~\ref{dualitydelcon},  $(\bG^*[H])^*$ can be obtained from $\bG$ by contracting edges and removing some number (potentially zero) of isolated vertices, each boundary component of $(\bG^*[H])^*$ naturally corresponds to one in $\bG$. So we let $\F'$  denote the restriction of $\F$ to the set of all boundary components in $(\bG^*[H])^*$ under this correspondence, and $\wF'$ denote the restriction of $\wF$ to the blocks of $\F'$.

\begin{definition}
    Let $\pG=(\bG,\V,\wV,\F,\wF)$ be a packaged ribbon graph, and $\Gamma$ be a finite group whose irreducible representations have dimensions $n_1,...,n_k$. If $G(\bG;\V)$ is connected, then we define
    \[q_\Gamma^1(\pG_\V):=|\Gamma|^{e(\bG)-|\V|}\sum_{k} n_{k}^{f(\bG)-e(\bG)+|\V|-\wV(\bG)}.\]
    If $G(\bG;\V)$ is disconnected, then we define 
    \[q_\Gamma^1(\pG_\V):=\prod_{\substack{K \text{ cpt. of } \\  G(\bG;\V)}}q_\Gamma^1(\pG_\V[K]).\]
    Finally, we define
    \[q_\Gamma(\pG_\V):= \sum_{A\subseteq E} (-1)^{|A^c|}q_\Gamma^1( (\pG|A)_\V ).\]
\end{definition}

\begin{definition}
    Let $\pG=(\bG,\V,\wV,\F,\wF)$ be a packaged ribbon graph, and $\Gamma$ be a finite group whose irreducible representations have dimensions $n_1,...,n_k$. If $G(\bG^*;\F)$ is connected, then
    \[p_\Gamma^1(\pG_\F):=|\Gamma|^{e(\bG)-|\F|}\sum_{k} n_{k}^{v(\bG)-e(\bG)+|\F|-\wF(\bG)},\]
    If $G(\bG^*;\F)$ is disconnected, then
    \[p_\Gamma^1(\pG_\F):=\prod_{\substack{H \text{ cpt. of } \\  G(\bG^*;\F)   }}p_\Gamma^1(\pG_\F[H]).\]
   Finally, we define
    \[p_\Gamma(\pG_\F):= \sum_{A\subseteq E} (-1)^{|A^c|}p_\Gamma^1( (\pG/A^c)_\F).\] 
\end{definition}

\begin{lemma}\label{jkk}
 Let  $\pG=(\bG,\V,\wV,\F,\wF)$ be a packaged ribbon graph and suppose its  dual is  $\pG^*=(\bG^*,\V^*,\wVdual,\F^*,\wFdual)$, then
 $q_\Gamma(\pG_\V)   = p_\Gamma(\pG^*_{\F^*})  $ and $ p_\Gamma(\pG_{\F}) = q_\Gamma(\pG^*_{\V^*})  $.
\end{lemma}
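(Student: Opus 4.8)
The plan is to reduce the statement to a comparison of the defining formulas for $q_\Gamma$ and $p_\Gamma$ under the duality correspondence $\pG \mapsto \pG^*$, using the fact that duality swaps the roles of the vertex partition and the boundary-component partition. First I would unwind the definitions: by definition $q_\Gamma(\pG_\V) = \sum_{A\subseteq E}(-1)^{|A^c|} q_\Gamma^1((\pG|A)_\V)$ and $p_\Gamma(\pG^*_{\F^*}) = \sum_{A\subseteq E}(-1)^{|A^c|} p_\Gamma^1((\pG^*/A^c)_{\F^*})$. Since $(-1)^{|A^c|}$ appears on both sides and the sum ranges over all $A\subseteq E$, it suffices to establish the term-by-term identity $q_\Gamma^1((\pG|A)_\V) = p_\Gamma^1((\pG^*/A^c)_{\F^*})$ for every $A\subseteq E$ (and dually for the second claimed equality, which follows by applying the first to $\pG^*$ and using that duality is an involution).

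The key geometric input is Proposition~\ref{dualitydelcon}: $(\pG|A)^* = (\pG\backslash A^c)^* = \pG^*/A^c$, so the packaged ribbon graph $\pG^*/A^c$ is exactly the dual of $\pG|A$. Thus the claimed term-by-term identity becomes $q_\Gamma^1((\pG|A)_\V) = p_\Gamma^1(((\pG|A)^*)_{\F^*})$, i.e.\ I just need: for any packaged ribbon graph $\bH := \pG|A$ with dual $\bH^*$, one has $q_\Gamma^1(\bH_\V) = p_\Gamma^1((\bH^*)_{\F^*})$. Now I would compare the two defining expressions directly. In the connected case, $q_\Gamma^1(\bH_\V) = |\Gamma|^{e(\bH)-|\V|}\sum_k n_k^{f(\bH)-e(\bH)+|\V|-\wV(\bH)}$, while $p_\Gamma^1((\bH^*)_{\F^*}) = |\Gamma|^{e(\bH^*)-|\F^*|}\sum_k n_k^{v(\bH^*)-e(\bH^*)+|\F^*|-\wFdual(\bH^*)}$. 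Under duality: $e(\bH^*)=e(\bH)$; the boundary components of $\bH^*$ are the vertices of $\bH$ and the partition $\F^*$ of $V(\bH^*)$... wait, more precisely $\V^{**}$-type bookkeeping — the partition on the boundary components of $\bH^*$ is the one induced from $\V$, so $|\F^*| = |\V|$ (as blocks) and $\wFdual$ on $\bH^*$ equals $\wV$ on $\bH$; also $v(\bH^*) = f(\bH)$ and $f(\bH^*) = v(\bH)$. Substituting these equalities makes the two expressions coincide, and by Theorem~\ref{thup}/the duality section, $G(\bH;\V)$ connected $\iff$ $G((\bH^*)^*;\F^*)$-type packaging connected, so the case split matches up.

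For the disconnected case I would argue that the multiplicativity in both definitions is compatible with duality: the connected components $K$ of $G(\bG|A;\V)$ correspond, via $\bG[K]$ and Proposition~\ref{dualitydelcon} applied componentwise, to the connected components $H$ of $G((\bG|A)^*;\F^*) = G(\bG^*/A^c;\F^*)$, and under this correspondence $\pG_\V[K]$ is dual to the corresponding $\pG^*_{\F^*}[H]$ (this is precisely why the somewhat awkward definitions $\pG_\V[K] := (\bG[K],\V',\wV')$ and $\pG_\F[H] := ((\bG^*[H])^*,\F',\wF')$ were set up). So the products $\prod_K q_\Gamma^1(\pG_\V[K])$ and $\prod_H p_\Gamma^1(\pG^*_{\F^*}[H])$ agree term by term by the connected case. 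I expect the main obstacle to be the bookkeeping in this last step: carefully verifying that the restricted partitions and weightings on a component $K$ really do dualize to those on the matching component $H$, i.e.\ that $\pG_\V[K]^* = \pG^*_{\F^*}[H]$ as \emph{weighted} partitioned ribbon graphs and not merely as ribbon graphs. Everything else is a direct substitution of Euler-characteristic identities, but this compatibility of the component-restriction operations with duality is the place where one has to be genuinely careful.
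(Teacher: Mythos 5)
Your proof is correct and follows essentially the same route as the paper's: establish the term-wise identity $q^1_\Gamma(\pG_\V)=p^1_\Gamma(\pG^*_{\F^*})$ under duality (connected case by direct substitution of $e(\bG^*)=e(\bG)$, $v(\bG^*)=f(\bG)$, $|\F^*|=|\V|$, $\wFdual=\wV$, disconnected case by multiplicativity over the isomorphic packagings $G((\bG^*)^*;\F^*)\cong G(\bG;\V)$), then transfer to $q_\Gamma=p_\Gamma$ via Proposition~\ref{dualitydelcon} applied to $(\pG|A)^*=\pG^*/A^c$, and deduce the second identity from duality being an involution. You in fact make explicit the term-by-term matching of the alternating sums and the component-wise bookkeeping that the paper's proof leaves implicit.
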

\begin{proof}
First note that  $G((\bG^*)^*;\F^*)$ is isomorphic to $G(\bG;\V)$. Then 
it is easily seen that $q^1_\Gamma(\pG_\V)   = p^1_\Gamma(\pG^*_{\F^*})  $ when $G((\bG^*)^*;\F^*)$ is connected, and the disconnected case follows from this:
 \[p^1_\Gamma(\pG^*_{\F^*}) 
 = \prod_{\substack{H \text{ cpt. of } \\  G((\bG^*)^*;\F^*) }} p_\Gamma^1((\pG^*_{\F^*}[H])
 = \prod_{\substack{H \text{ cpt. of } \\  G(\bG;\V) }} q_\Gamma^1((\pG_{\V}[H])) = q^1_\Gamma(\pG_\V) .
\]
By an application of Proposition~\ref{dualitydelcon} it follows that $q_\Gamma(\pG_\V)   = p_\Gamma(\pG^*_{\F^*})  $. 
Finally,  the identity  $ p_\Gamma(\pG_{\F}) = q_\Gamma(\pG^*_{\V^*})  $ follows since duality is an involution.
\end{proof}

\begin{proposition}
Let $\bG=(V,E)$ be a connected orientable ribbon graph and $F$ be its set of boundary components.  Also, let $\pG $ be the  packaged ribbon graph $(\bG, \{  \{v\}:v\in V \} , 0_V, \{ \{f\}:f\in F \} , 0_F )$, where  $0_{\cdot}$ denotes the zero-map. Then, $q_\Gamma^1(\pG_\V)=q_\Gamma^1(\bG)$, $q_\Gamma(\pG_\V)=q_\Gamma(\bG)$, $p_\Gamma^1(\pG_\F)=p_\Gamma^1(\bG)$, and $p_\Gamma(\pG_\F)=p_\Gamma(\bG)$.
In particular, $q_\Gamma(\pG_\V)$ gives  the number of nowhere-identity local $\Gamma$-flows in $\bG$, and  $p_\Gamma(\pG_\F)$ gives  the number of nowhere-identity local $\Gamma$-tensions in $\bG$.  
\end{proposition}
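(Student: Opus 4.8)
The plan is to unwind all four definitions for the trivially packaged graph $\pG=(\bG,\{\{v\}:v\in V\},0_V,\{\{f\}:f\in F\},0_F)$ and check that each collapses to the corresponding ribbon-graph quantity recalled earlier in the section. The key observation used throughout is that, because both partitions are discrete and both weightings are identically zero, the packaging $G(\bG;\V)$ is (isomorphic to) the underlying graph of $\bG$ and $G(\bG^*;\F)$ is the underlying graph of $\bG^*$; in particular $|\V|=v(\bG)$, $|\F|=f(\bG)$, each block induces a single vertex, and all the weight-correction terms $\wV(\cdot)$ and $\wF(\cdot)$ vanish. Since $\bG$ is connected, so is $\bG^*$, hence both packagings are connected and we fall into the first case of the definitions of $q_\Gamma^1(\pG_\V)$ and $p_\Gamma^1(\pG_\F)$.

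First I would dispatch $q_\Gamma^1$ and $p_\Gamma^1$, which are immediate substitutions: putting $|\V|=v(\bG)$ and $\wV(\bG)=0$ into the formula for $q_\Gamma^1(\pG_\V)$ recovers $|\Gamma|^{e(\bG)-v(\bG)}\sum_k n_k^{f(\bG)-e(\bG)+v(\bG)}=q_\Gamma^1(\bG)$, and putting $|\F|=f(\bG)$, $\wF(\bG)=0$ into the formula for $p_\Gamma^1(\pG_\F)$ recovers $p_\Gamma^1(\bG)$.

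For $q_\Gamma(\pG_\V)=\sum_{A\subseteq E}(-1)^{|A^c|}q_\Gamma^1((\pG|A)_\V)$ I would match the alternating sum with $\sum_{A}(-1)^{|A^c|}q_\Gamma^1(\bG|A)$ term by term. Deletion leaves the vertex partition and its weighting literally unchanged, so $(\pG|A)_\V$ is the trivially packaged $\bG|A$; its packaging has connected components in natural bijection with the connected components $\bG_1,\dots,\bG_m$ of $\bG|A$, and the induced packaged ribbon graph $(\pG|A)_\V[K_i]$ is the trivially packaged $\bG_i$. Applying the (already verified) connected case componentwise and then the product definition of $q_\Gamma^1$ yields $q_\Gamma^1((\pG|A)_\V)=\prod_i q_\Gamma^1(\bG_i)$, which equals $q_\Gamma^1(\bG|A)$ since local $\Gamma$-flows of a disjoint union factor as a product over components. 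Hence $q_\Gamma(\pG_\V)=q_\Gamma(\bG)$. The argument for $p_\Gamma(\pG_\F)=\sum_A(-1)^{|A^c|}p_\Gamma^1((\pG/A^c)_\F)$ is parallel with contraction in place of deletion: contraction transports the boundary-component partition and weighting along the canonical bijection of boundary components, so $(\pG/A^c)_\F$ is the trivially packaged $\bG/A^c$ (which has $|A|$ edges and, since boundary components are preserved under contraction, still $f(\bG)$ of them); expanding over the connected components of $\bG/A^c$ and using the connected-case formula reproduces $p_\Gamma^1(\bG/A^c)$ exactly, so $p_\Gamma(\pG_\F)=p_\Gamma(\bG)$. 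Alternatively, the two $p$-identities can be deduced from the two $q$-identities by noting that $\pG^*$ is the trivially packaged $\bG^*$ and invoking $p_\Gamma^1(\bG)=q_\Gamma^1(\bG^*)$, $p_\Gamma(\bG)=q_\Gamma(\bG^*)$ from \cite{maps1} together with Lemma~\ref{jkk}.

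The one point requiring care is the disconnected case inside the alternating sums: one must confirm that the product-over-components definitions of $q_\Gamma^1$ and $p_\Gamma^1$ on packaged ribbon graphs coincide with the conventions under which the identities $q_\Gamma(\bG)=\sum_A(-1)^{|A^c|}q_\Gamma^1(\bG|A)$ and $p_\Gamma(\bG)=\sum_A(-1)^{|A^c|}p_\Gamma^1(\bG/A^c)$ from \cite{maps1} are stated — they do, because the defining equations for local flows (resp. tensions) are indexed by vertices (resp. boundary components) and therefore decouple over connected components — and that under the discrete partitions the components of $G(\bG|A;\V)$ (resp. $G((\bG/A^c)^*;\F)$) really are in bijection with those of $\bG|A$ (resp. $\bG/A^c$). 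Finally, since $q_\Gamma(\bG)$ and $p_\Gamma(\bG)$ were shown in \cite{maps1} to count, respectively, the nowhere-identity local $\Gamma$-flows and local $\Gamma$-tensions of $\bG$, the last sentence of the statement is an immediate consequence of the first two identities.
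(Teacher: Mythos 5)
Your proposal is correct and takes essentially the same approach as the paper: the $q_\Gamma^1$ and $p_\Gamma^1$ identities are direct substitutions once the discrete partitions and zero weights are unwound, and the alternating-sum identities rest on the fact that deletion leaves $(\V,\wV)$ unchanged and contraction leaves $(\F,\wF)$ unchanged, so the sums match term by term. The paper's proof is simply a terser version of yours, leaving the component-wise bookkeeping implicit.
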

\begin{proof}
That $q_\Gamma^1(\pG_\V)=q_\Gamma^1(\bG)$ and $p_\Gamma^1(\pG_\F)=p_\Gamma^1(\bG)$ is immediate. 
Since contraction does not change $\F$ and deletion does not change $\V$ it follows that  
$p_\Gamma(\pG_\V)=p_\Gamma(\bG)$ and $q_\Gamma(\pG_\F)=q_\Gamma(\bG)$.
\end{proof}

\begin{theorem}\label{flowspecial}
  Let $\pG=(\bG,\V,\wV,\F,\wF)$ be a packaged ribbon graph and $\Gamma$ be a finite group whose irreducible representations have dimensions $n_1,...,n_k$. Then the following identities hold.
  \begin{enumerate}
      \item When $x=1$, $y=-|\Gamma|$, $x_{g} =1$, and $y_{g} = -\frac{1}{|\Gamma|}\sum_{i=1}^{k}n_i^{2-2g}$, we have
    \[q_\Gamma(\pG_\V)= (-1)^{e(\bG)-|\V|}\T(\pG;\boldsymbol{x},\boldsymbol{y}).\]
      \item When $x=-|\Gamma|$, $y=1$, $x_{g} =-\frac{1} {|\Gamma|}\sum_{i=1}^{k}n_i^{2-2g}$, and $y_{g} = 1$, we have 
     \[p_\Gamma(\pG_\F)=(-1)^{e(\bG)-|\F|}\T(\pG;\boldsymbol{x},\boldsymbol{y}).\]
  \end{enumerate}
\end{theorem}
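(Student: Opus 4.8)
The plan is to establish part~(1) directly, by evaluating the state-sum of Definition~\ref{fhasj} at the prescribed values and matching it term-by-term against the definition of $q_\Gamma(\pG_\V)$, and then to deduce part~(2) from part~(1) by duality.

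For part~(1), I would first substitute $x=1$, $y=-|\Gamma|$, $x_g=1$, $y_g=-\tfrac1{|\Gamma|}\sum_i n_i^{2-2g}$ into the state-sum. This collapses the ``$\Theta$-half'' (the $x$ and $x_g$ factors) to $1$, leaving
\[\T(\pG;\boldsymbol{x},\boldsymbol{y})=\sum_{A\subseteq E}(-|\Gamma|)^{n(G(\bG|A;\V))}\prod_{K}\Bigl(-\tfrac1{|\Gamma|}\sum_i n_i^{2-2g(\bG,K)}\Bigr),\]
the inner product being over the connected components $K$ of $G(\bG|A;\V)$. The key local identity is $f(\bG[K])-e(K)+v(K)-\wV(K)=2-2g(\bG,K)$, which is just the definition of $g(\bG,K)$ rearranged using $k(K)=1$. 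Combined with $e(K)-v(K)=n(K)-1$ for connected $K$, it gives $q_\Gamma^1\bigl((\pG|A)_\V[K]\bigr)=|\Gamma|^{n(K)-1}\sum_i n_i^{2-2g(\bG,K)}$; taking the product over components of $G(\bG|A;\V)$ and using that this packaging has exactly $|\V|$ vertices and $|A|$ edges (so $\sum_K(n(K)-1)=n(G(\bG|A;\V))-k(G(\bG|A;\V))=|A|-|\V|$) yields $q_\Gamma^1\bigl((\pG|A)_\V\bigr)=|\Gamma|^{|A|-|\V|}\prod_K\sum_i n_i^{2-2g(\bG,K)}$. A parallel accounting of the powers of $-1$ and $|\Gamma|$ on the state-sum side shows the $A$-term of the substituted $\T$ equals $(-1)^{e(\bG)-|\V|}(-1)^{|A^c|}|\Gamma|^{|A|-|\V|}\prod_K\sum_i n_i^{2-2g(\bG,K)}$. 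Summing over $A$ and comparing with $q_\Gamma(\pG_\V)=\sum_{A}(-1)^{|A^c|}q_\Gamma^1\bigl((\pG|A)_\V\bigr)$ proves $q_\Gamma(\pG_\V)=(-1)^{e(\bG)-|\V|}\T(\pG;\boldsymbol{x},\boldsymbol{y})$.

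For part~(2), I would apply part~(1) to the dual. By Lemma~\ref{jkk}, $p_\Gamma(\pG_\F)=q_\Gamma(\pG^*_{\V^*})$; by part~(1) applied to $\pG^*$ this equals $(-1)^{e(\bG^*)-|\V^*|}\T(\pG^*;\boldsymbol{x},\boldsymbol{y})$ evaluated at the part~(1) substitution; and by Theorem~\ref{thup}, $\T(\pG^*;\boldsymbol{x},\boldsymbol{y})=\T(\pG;\boldsymbol{y},\boldsymbol{x})$, so interchanging the roles of $\boldsymbol{x}$ and $\boldsymbol{y}$ turns the part~(1) substitution into precisely the part~(2) substitution. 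Since $e(\bG^*)=e(\bG)$ and $|\V^*|=|\F|$, this yields the stated identity. The hardest part will be purely bookkeeping: keeping the exponents of $|\Gamma|$, the signs, and the per-component factors $\sum_i n_i^{2-2g}$ aligned between the state-sum and the case-split definition of $q_\Gamma^1(\pG_\V)$, and in particular verifying that the connected formula for $q_\Gamma^1$ factors compatibly through components once the identity $f(\bG[K])-e(K)+v(K)-\wV(K)=2-2g(\bG,K)$ is in hand; there is no conceptual difficulty beyond that.
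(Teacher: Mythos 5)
Your proposal is correct and follows essentially the same route as the paper's proof: both establish part~(1) by matching the substituted state-sum against $\sum_A(-1)^{|A^c|}q^1_\Gamma((\pG|A)_\V)$ via the identity $f(\bG[K])-e(K)+v(K)-\wV(K)=2-2g(\bG,K)$ and the same bookkeeping of signs and powers of $|\Gamma|$, and both deduce part~(2) from part~(1) using Lemma~\ref{jkk} and Theorem~\ref{thup}. The only cosmetic difference is that you start from $\T$ and work toward $q_\Gamma$ rather than the reverse.
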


\begin{proof}
    \begin{align*}
    q_\Gamma (\pG_\V) 
    &= \sum_{A\subseteq E} (-1)^{|A^c|} \prod_{\substack{K \text{ cpt. of } \\  G(\bG|A;\V)}} |\Gamma|^{e(K)-v(K)} \sum_k n_k^{f(\bG[K])-e(K)+v(K)-\wV(K)}
 \\
    &= \sum_{A\subseteq E} (-1)^{|A^c|} |\Gamma|^{e(\bG|A)-v(G(\bG|A;\V))}\prod_{\substack{K \text{ cpt. of } \\  G(\bG;\V)}} \sum_k n_k^{f(\bG[K])-e(K)+v(K)-\wV(K)}
    \\  & 
       =  (-1)^{e(\bG)-|\V|} \sum_{A\subseteq E} ( -|\Gamma|)^{n(G(\bG|A;\V))}  \prod_{\substack{K \text{ cpt. of } \\  G(\bG|A;\V)}}
    -\frac{1}{|\Gamma|}\sum_{i=1}^{k}n_i^{2-2g (\bG,K)},      
    \end{align*}
    which is $(-1)^{e(\bG)-|\V|}\T(\pG;\boldsymbol{x},\boldsymbol{y})$ when  $x=1, y=-|\Gamma|, x_g =1, y_{g} = -\frac{1}{|\Gamma|}\sum_{i=1}^{k}n_i^{2-2g}$.
    
    The result for $p_\Gamma(\pG_\F)$ follows from the first item by Lemma~\ref{jkk} and Theorem~\ref{thup}.
\end{proof}

By applying Theorem \ref{dcthm} to the above specialisations, we get the following recursions for the number of nowhere-identity local flows and tensions. This is the main result of this section.

\begin{theorem}\label{flowdc}
    Let $\pG=(\bG,\V,\wV,\F,\wF)$ be a packaged ribbon graph, $e\in E(\bG)$ and $\Gamma$ be a finite group whose irreducible representations have dimensions $n_1,...,n_k$.  
    \begin{enumerate}
        \item For any edge $e$,
        \[q_\Gamma(\pG_\V)=|\Gamma|^{2-\m}q_\Gamma((\pG/e)_\V)-q_\Gamma( (\pG\ba e)_\V).\]
         If $\pG$ has no edges, then 
         \[q_\Gamma(\pG_\V)=\prod_{[v]\in \V} |\Gamma|^{-1}\sum_{k}n_k^{1+|[v]|-\wV([v])}. \]
        \item For any edge $e$,
        \[p_\Gamma(\pG_\F)=|\Gamma|^{2-\n}p_\Gamma((\pG\ba e)_\F)-p_\Gamma((\pG/e)_\F).\]
         If $\pG$ has no edges, then 
         \[p_\Gamma(\pG_\F)=\prod_{[f]\in \F} |\Gamma|^{-1}\sum_{k}n_k^{1+|[f]|-\wF([f])}. \]
    \end{enumerate}    
  \end{theorem}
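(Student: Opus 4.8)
The plan is to derive Theorem~\ref{flowdc} directly from the specialisation in Theorem~\ref{flowspecial} together with the deletion--contraction relation of Theorem~\ref{dcthm}; the substance of the argument is sign bookkeeping, since the analytic content is already contained in those two results.

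\emph{Part (1).} By Theorem~\ref{flowspecial}(1), with $x=1$, $y=-|\Gamma|$, $x_g=1$ and $y_g=-\frac{1}{|\Gamma|}\sum_i n_i^{2-2g}$, we have $q_\Gamma(\pG_\V) = (-1)^{e(\bG)-|\V|}\T(\pG;\boldsymbol{x},\boldsymbol{y})$, and the same identity holds verbatim for $\pG\backslash e$ and $\pG/e$. First I would record the elementary bookkeeping: $e(\bG\backslash e) = e(\bG)-1 = e(\bG/e)$; deletion leaves $\V$, hence $|\V|$, unchanged; and contraction leaves $|\V|$ unchanged when $\m=1$ (Cases~\ref{ccase1},~\ref{ccase2}) but decreases it by one when $\m=2$ (Case~\ref{ccase3}), so $|\V(\pG/e)| = |\V|-(\m-1)$. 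Feeding these into the three instances of Theorem~\ref{flowspecial}(1) gives
\[\T(\pG\backslash e;\boldsymbol{x},\boldsymbol{y}) = -(-1)^{e(\bG)-|\V|}\,q_\Gamma((\pG\backslash e)_\V),\qquad \T(\pG/e;\boldsymbol{x},\boldsymbol{y}) = (-1)^{\m}(-1)^{e(\bG)-|\V|}\,q_\Gamma((\pG/e)_\V).\]
Then I substitute these, together with $x=1$ and $y=-|\Gamma|$, into $\T(\pG;\boldsymbol{x},\boldsymbol{y}) = x^{2-\n}\T(\pG\backslash e;\boldsymbol{x},\boldsymbol{y})+y^{2-\m}\T(\pG/e;\boldsymbol{x},\boldsymbol{y})$ from Theorem~\ref{dcthm}, multiply through by $(-1)^{e(\bG)-|\V|}$, and simplify: the deletion term becomes $-q_\Gamma((\pG\backslash e)_\V)$, and since $(-|\Gamma|)^{2-\m}(-1)^{\m}=|\Gamma|^{2-\m}$ the contraction term becomes $|\Gamma|^{2-\m}q_\Gamma((\pG/e)_\V)$, which is exactly the claimed recursion.

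\emph{Edgeless base case.} For edgeless $\pG$, substitute the specialisation into the second formula of Theorem~\ref{dcthm}: the $\F$-product is $1$ because $x_g=1$, and $y_{(1-|[v]|+\wV([v]))/2}=-\frac{1}{|\Gamma|}\sum_k n_k^{1+|[v]|-\wV([v])}$ since $2-2g=1+|[v]|-\wV([v])$. Thus $\T(\pG;\boldsymbol{x},\boldsymbol{y})=\prod_{[v]\in\V}\bigl(-\frac{1}{|\Gamma|}\sum_k n_k^{1+|[v]|-\wV([v])}\bigr)$, which carries the sign $(-1)^{|\V|}$; multiplying by $(-1)^{e(\bG)-|\V|}=(-1)^{-|\V|}=(-1)^{|\V|}$ cancels it and yields $\prod_{[v]\in\V}|\Gamma|^{-1}\sum_k n_k^{1+|[v]|-\wV([v])}$.

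\emph{Part (2), by duality.} Rather than repeat the computation, I would dualise. Writing $\pG^*=(\bG^*,\V^*,\wVdual,\F^*,\wFdual)$, Lemma~\ref{jkk} gives $p_\Gamma(\pG_\F)=q_\Gamma(\pG^*_{\V^*})$, so applying Part~(1) to $\pG^*$ and $e$ gives $p_\Gamma(\pG_\F)=|\Gamma|^{2-\mgd}q_\Gamma((\pG^*/e)_{\V^*})-q_\Gamma((\pG^*\backslash e)_{\V^*})$. By Proposition~\ref{dualitydelcon}, $\pG^*/e=(\pG\backslash e)^*$ and $\pG^*\backslash e=(\pG/e)^*$, so Lemma~\ref{jkk} again rewrites the right side as $|\Gamma|^{2-\mgd}p_\Gamma((\pG\backslash e)_\F)-p_\Gamma((\pG/e)_\F)$; finally $\mgd=\np$ (the identification of the three $\mu$-edge-types in the dual with the three $\eta$-edge-types in the primal, already used in the proof of Theorem~\ref{dcthm}) produces the stated formula. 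The edgeless tension formula transfers the same way, using that the dual of an edgeless packaged ribbon graph is edgeless and that, under the correspondence $\F=\V^*$, one has $|\F|=|\V^*|$ and $\wF([f])-|[f]|=\wVdual([f])-|[f]|$.

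\emph{Main obstacle.} There is no genuine difficulty here: every tool (Theorems~\ref{dcthm} and~\ref{flowspecial}, Lemma~\ref{jkk}, Proposition~\ref{dualitydelcon}) is already proved. The one point that requires care is matching the factor $(-|\Gamma|)^{2-\m}$ from Theorem~\ref{dcthm} against the change in $|\V|$ under contraction — remembering that $|\V|$ drops precisely in the $\m=2$ case — since this is exactly what makes the accumulated powers of $-1$ collapse to the clean sign $-1$ in the recursion and to a trivial sign in the base case; the analogous statement for $|\F|$ under deletion underlies Part~(2).
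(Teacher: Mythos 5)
Your proposal is correct and follows essentially the same route as the paper: substitute the specialisation of Theorem~\ref{flowspecial} into the deletion--contraction relation of Theorem~\ref{dcthm} and track how $e(\bG)$ and $|\V|$ change under deletion and contraction (the paper phrases the latter as the change in $v(G(\bG;\V))$), with the sign bookkeeping working out exactly as you describe. The only cosmetic differences are that you obtain the edgeless case from the edgeless formula in Theorem~\ref{dcthm} rather than directly from the definition of $q_\Gamma(\pG_\V)$, and you make the ``similar argument'' for Part~(2) explicit via Lemma~\ref{jkk} and Proposition~\ref{dualitydelcon} --- which is precisely how the paper itself derives Part~(2) of Theorem~\ref{flowspecial}.
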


\begin{proof}
   The first item follows by Theorem \ref{flowspecial} with the specialisations of Theorem \ref{dcthm}, and using the fact that $v(G(\bG;\V))=v(G(\bG\ba e;\V))=v(G(\bG/ e;\V'))$ when $\mu(e)=1$, and $v(G(\bG;\V))=v(G(\bG\ba e;\V))=v(G(\bG/e;\V'))+1$ when $\mu(e)=2$. The case when $\pG$ has no edges follows directly from the definition of $q_\Gamma(\pG_\V)$ as the only edge subset of $E(\bG)$ is the empty set.

    By a similar argument, one can also obtain the second half of the theorem. We omit the details.
\end{proof}

One of the anonymous referees observed that the recurrences for the number of (nowhere-identity) local flows and tensions of a ribbon graph (with no packaging) can partially
be argued directly in a similar way to (nowhere-zero) flows and tensions for graphs. However, this cannot be done for all edge types which is where the lifting to packaged ribbon graphs comes
into its own.

\begin{example}
    We will use Theorem \ref{flowdc} to find $q_\Gamma(\pG_\V)$ for the initial packaged ribbon graph $\pG=(\bG,\V,\wV,\F,\wF)$ depicted in Figure \ref{PolyDC} (i.e. a 2-cycle interlaced with a loop, every partition block has size one, and every block is mapped to zero). Let $\Gamma=D_6$ be the dihedral group of order 6, so $\Gamma$ has two 1-dimensional representations and one 2-dimensional representation. Using Theorem \ref{flowdc} and the deletion-contraction as depicted in Figure~\ref{PolyDC}, 
    \begin{align*}
        q_\Gamma(\pG_\V)&= -6^{-2}(2+2^{2})^2+2\cdot 6^{-1}(2+2^2) -(2+2^2)+ 6^{-1}(2+2^2)^2-2(2+2^2)  +6(2+2^0)\\
        &= -1 + 2 - 6 + 6 - 12 + 18 = 7. 
    \end{align*}
\end{example}

\section*{Acknowledgements}
M.T. would like to thank Guus Regts for helpful discussions. This work will form part of the second author's PhD thesis, which will include additional analysis of the non-orientable case. 

\section*{Open access and research data}
For the purpose of open access, the authors have applied a Creative Commons
Attribution (CC BY) licence to any Author Accepted Manuscript version arising.
No underlying data is associated with this article.
There are no conflicts of interest.

\bibliographystyle{abbrv} 
\bibliography{delcon.bib}
\end{document}